\let\oldbibliography\thebibliography
\renewcommand{\thebibliography}[1]{%
  \oldbibliography{#1}%
  \setlength{\itemsep}{0.5mm}%
}
\newtheorem{thm}{Theorem}[section]
\newtheorem{cor}[thm]{Corollary}
\newtheorem{prop}[thm]{Proposition}
\def\R{\mathbb{R}}
\def\I{\infty}
\def\txtd{{\textnormal{d}}}
\def\txte{{\textnormal{e}}}
\def\txts{{\textnormal{s}}}
\def\txtD{{\textnormal{D}}}
\def\txtT{{\textnormal{T}}}
\def\txtN{{\textnormal{N}}}
\newcommand{\be}{\begin{equation}}
\newcommand{\ee}{\end{equation}}
\newcommand{\bea}{\begin{eqnarray}}
\newcommand{\eea}{\end{eqnarray}}
\newcommand{\beann}{\begin{eqnarray*}}
\newcommand{\eeann}{\end{eqnarray*}}
\newcommand{\benn}{\begin{equation*}}
\newcommand{\eenn}{\end{equation*}}
\def\ra{\rightarrow}
\def\I{\infty}
\def\Rey{\mbox{Re}}   
\newcommand{\cC}{{\mathcal C}}  
\newcommand{\cE}{{\mathcal E}}  
\newcommand{\cF}{{\mathcal F}}  
\newcommand{\cG}{{\mathcal G}}  
\newcommand{\cI}{{\mathcal I}}  
\newcommand{\cK}{{\mathcal K}}  
\newcommand{\cM}{{\mathcal M}}  
\newcommand{\cO}{{\mathcal O}}  
\newcommand{\cS}{{\mathcal S}}  
\newcommand{\cT}{{\mathcal T}}  
\begin{document}

\title{Tracking Particles in Flows near Invariant Manifolds via Balance Functions}

\author{Christian Kuehn\footnotemark[2]\ \footnotemark[5]
\and Francesco Roman\`o\footnotemark[3]\
\and Hendrik C. Kuhlmann\footnotemark[3]}

\maketitle

\renewcommand{\thefootnote}{\fnsymbol{footnote}}

\footnotetext[2]{Technical Unversity Munich, Fakult\"at f\"ur Mathematik, 
Boltzmannstr.~3, 85748 Garching bei M\"unchen, Germany}
\footnotetext[3]{Institute of Fluid Mechanics and Heat Transfer, 
Vienna University of Technology, Getreidemarkt 9, 1060 Vienna, Vienna, Austria}
\footnotetext[5]{CK acknowledges support via an APART fellowship of the Austrian 
Academy of Sciences ({\"{O}AW}) and via a Lichtenberg Professorship of the 
VolkswagenFoundation. Furthermore,
CK would like to thank Peter Szmolyan and Daniel Karrasch for general discussions
regarding non-autonomous dynamical systems, and Armin Rainer for a clarifying
remark regarding differentiability of certain parametrized eigenvalues.}

\begin{abstract}
Particles moving inside a fluid near, and interacting with, invariant manifolds is
a common phenomenon in a wide variety of applications. One elementary question is
whether we can determine once a particle has entered a neighbourhood of an invariant
manifold, when it leaves again. Here we approach this problem mathematically by
introducing balance functions, which relate the entry and exit points of a particle
by an integral variational formula. We define, study, and compare different
natural choices for balance functions and conclude that an efficient compromise
is to employ normal infinitesimal Lyapunov exponents. We apply our results to two
different model flows: a regularized solid-body rotational flow and the asymmetric
Kuhlmann--Muldoon model developed in the context of liquid bridges. Furthermore, we
employ full numerical simulations of the Navier-Stokes equations of a two-way coupled 
particle in a shear--stress-driven cavity to test balance functions for a particle
moving near an invariant wall. In conclusion, our theoretically-developed framework
seems to be applicable to models as well as data to understand particle motion near
invariant manifolds.
\end{abstract}

\textbf{Keywords:} Invariant manifold, fluid dynamics, entry-exit function, perturbation 
theory, particle-surface interaction, fast-slow systems, nonautonomous dynamics.

\section{Introduction}

Tracking particles inside a flow is a topic of general importance in a wide variety of
applications, ranging from small scales in microfluidics~\cite{StoneStroockAjdari}, to
mesoscale problems in sedimentation~\cite{VasseurCox}, to large oceanic scales~\cite{McCave},
and even astrophysical scales~\cite{ClarkeCarswell}. Many results for particle motion
are immediately useful for very classical problems such as turbulent pipe
flow~\cite{YoungLeeming} as well as newly arising recent challenges, for example in
the search for parts of crashed airplanes~\cite{RosebrockOkeCarroll}. The dynamics of
particles within certain domain and flow geometries has been studied
intensively, e.g., for particles moving near walls~\cite{CoxHsu}, colliding with
walls~\cite{JosephZenitHuntRosenwinkel}, with respect to rotation-induced
forces~\cite{RubinowKeller} or changing viscosity~\cite{Haber}, or via quite general 
partial differential and integral equations of motion for particles~\cite{MaxeyRiley,PowerPower}. 
Another important branch of current research is to study separation structures in
flows, which act as transport barriers, i.e., particles cannot pass through them.
The definition~\cite{HallerYuan,Haller5,ShaddenLekienMarsden},
analysis~\cite{Haller4,FroylandPadbergEnglandTreguier}, and
computation~\cite{GreenRowleyHaller,LekienRoss} of these Lagrangian coherent structures has
received a lot of attention; see also the references in the recent review~\cite{Haller3}
and the focus issue~\cite{PeacockDabiri}. A directly related topic are invariant
manifolds for general dynamical systems~\cite{Fenichel1,HirschPughShub}. The
autonomous (time-independent vector field) has to be studied first but since 
transport barriers in flows are often viewed in terms of non-autonomous dynamical
systems~\cite{KloedenRasmussen,Rasmussen}, i.e., as time-dependent invariant
manifolds~\cite{AulbachRasmussenSiegmund,BranickiWiggins,DucSiegmund}, we provide
a setup, which also works for non-autonomous cases.

\begin{figure}[htbp]
\psfrag{t0}{$t=t_0$}
\psfrag{t1}{$t\in \cI$}
\psfrag{t2}{$t=T$}
\psfrag{M}{$\cM$}
\psfrag{E}{$\cE$}
	\centering
		\includegraphics[width=0.9\textwidth]{./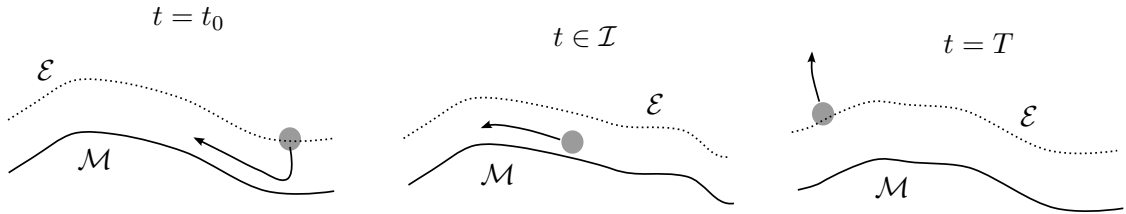}	
		\caption{\label{fig:01}Sketch of the basic situation studied in this paper. A
		particle (grey disc) moves near an invariant manifold $\cM$ inside a flow. The question
		is how to relate the entry and exit points into a small tubular neighbourhood $\cE$ of
		$\cM$; note that $\cM=\cM(t)$ and $\cE=\cE(t)$ could be time-dependent for general fluids.
		The key calculation is to relate the time $t_0$ at entry, via a drift time near $\cM$ inside
		some compact time domain $\cI$, to the final exit time $T$.}
\end{figure}

Despite considerable progress, there seems to be no completely general mathematical
framework available for some relatively elementary-looking problems involving particles.
One such question is, how to determine the entry-exit relationship for a particle near
an invariant structure; see Figure~\ref{fig:01}. In Section~\ref{sec:motivation}, we provide
further background from experiments, which motivated our work on entry-exit problems.

In this paper, we develop a mathematical framework based on balancing ideas, i.e., we 
ask the question which quantity is adequate
to compute (or measure) the entry-exit relationship of a particle moving near a
time-dependent invariant manifold. The main idea of this work is motivated by a
classical problem in multiple time scale systems~\cite{KuehnBook}. In
this context, the invariant manifolds of interest are slow manifolds~\cite{Jones,Fenichel4}.
Fast transverse dynamics to slow manifolds may be attracting as well as repelling in
certain directions. Of particular interest are trajectories which first get attracted
to slow manifolds, pass near certain bifurcation points of the fast dynamics, and
eventually get repelled from a neighbourhood of the slow manifold. This problem has been
studied intensively during recent years in the context of various bifurcations such as
fold points~\cite{DumortierRoussarie,KruSzm2}, Hopf
bifurcation~\cite{Neishtadt1,BaerErneuxRinzel}, as well as more complicated
singularities~\cite{Wechselberger1,KuehnUM}. One key idea in this context is the
so-called entry-exit (or way-in/way-out, or input-output) map which can be used to
calculate the relationship between entry and exit from a neighbourhood of a slow manifold
involving a stability change. Here we substantially extend this idea to the case
of fluid flows, as well as to arbitrary dimensions, and to various different possible
maps involving entry-exit from time-dependent invariant manifolds. Then we apply it to
several examples motivated by recent experiments and models of particle-surface
interaction~\cite{MuldoonKuhlmann,TanakaKawamuraUenoSchwabe,KuhlmannMuldoon} as well as
to full two-dimensional Navier-Stokes simulations for a particle motion fully coupled 
to the fluid. Our main results (in non-technical form) are the following:\medskip

\textit{Section~\ref{sec:basic}:} We introduce several natural notions for balance
  functions to determine entry-exit relationships based upon instantaneous
  eigenvalues, finite-time Lyapunov exponents (FTLE)~\cite{LekienRoss,Haller5}, fast-slow 
	scaling~\cite{KuehnBook} and normal infinitesimal Lyapunov exponents 
	(NILE)~\cite{HallerSapsis1}. We prove suitable differentiability
  as well as continuous dependence on data for the balance functions and develop
	perturbation theory results.\medskip

\textit{Section~\ref{sec:passive}:} To evaluate which concept is most promising, we
 investigate two test models (solid-body rotation and the Kuhlmann--Muldoon model), where
 the particle does not actively influence the flow. Three concepts turn out to have
 some drawbacks while NILE seems sufficiently easy to compute and conceptually the
 most adequate for our purpose.\medskip

\textit{Section~\ref{sec:creep}:} We consider direct numerical simulation (DNS) of the
Navier-Stokes equations for a two-way coupled particle in a shear--stress-driven cavity. In this
context, there is an invariant free-surface in the cavity, which is tracked by the particle for
a transient period. We use the DNS data to compute the NILE balance function. The results
show that, although the NILE balance function is  only an integrated locally linearized
estimator near the boundary, it performs very similar to a full  nonlinear analysis of the
particle velocities.\medskip

\textit{Section~\ref{sec:outlook}:} We conclude with an outlook of potential applications
and future challenges. We believe many interesting directions could be pursued beyond the
fundamental groundwork presented in this study. In particular, there are interesting
questions from theoretical, modelling, and data analysis perspectives.

\section{Motivation and Further Background}
\label{sec:motivation}

An example for an entry-exit problem of current 
interest~\cite{PushkinMelnikovShevtsova,KuhlmannMuldoon2,MelnikovPushkinShevtsova,KuhlmannMuldoon1} 
are particle accumulation structures 
(PAS)~\cite{TanakaKawamuraUenoSchwabe,SchwabeMizevUdhayasankarTanaka}. The phenomenon during 
which particles suspended in a liquid are rapidly segregating.\medskip 

\textbf{Remark:} The final version of this paper contains a figure 
from~\cite{SchwabeMizevUdhayasankarTanaka} illustrating this experiment, which we 
cannot reproduce here on the arXiv.\medskip 

A natural transport barrier for fully wetted particles is the liquid--gas 
interface of a cylindrical droplet (liquid bridge) in which a flow is driven by tangential 
shear stresses created by temperature-induced axial variations of the surface 
tension~\cite{Kuhlmann}. According to~\cite{HofmannKuhlmann} and~\cite{MuldoonKuhlmann} PAS 
are attractors for the motion of small particles in a three-dimensional steady flow which 
are created in a close neighbourhood of the invariant manifold represented by the liquid--gas 
interface. The mechanism involves the particle--free-surface interaction during which a particle 
enters a certain neighbourhood of the invariant manifold, experiences extra forces due to the 
presences of the interface, and exits to the bulk. If the properties of the entry--exit process 
are such that particles enter from a region of chaotic streamlines and exit to a region of 
regular streamlines of the steady three-dimensional flow a periodic attractor is typically 
created, in particular, if the particle inertia is small.\medskip 

A direct numerical simulation of this process based on the Navier--Stokes equations 
would require enormous computing resources, because all length scales must be properly 
resolved, ranging from the scale of the droplet over the scale of the particles down to 
the scale of the lubrication gap between a particle's surface and the liquid--gas interface. 
For that reason suitable entry-exit relations which go beyond inelastic collision 
models~\cite{HofmannKuhlmann} would be extremely helpful for a realistic and economic 
simulation of PAS using one-way coupling, e.g.\ in the framework of the 
Maxey--Riley equation~\cite{MaxeyRiley}; we refer to Section~\ref{ssec:MuldoonKuhlmann} 
for the analysis of a first model using balance functions in this direction. 

Analytical approaches would also be a first step to check whether there is a stretching 
mechanism near the surface so some particles stay there a lot longer than others although 
they entered in nearby regions. Such a stretching mechanism is key for the generation of 
chaotic dynamics in many situations including problems involving the classical Smale 
horseshoe~\cite{GH}. Furthermore, analytical entry-exit relationships could be used for 
perturbation theory (see Section~\ref{ssec:perturb}) as well as for extrapolation of 
particle locations based upon previous particle data (see Section~\ref{sec:creep}).

Further real-world examples for entry-exit problems are mixing processes, either in a 
rotating drum~\cite{OttinoKhakhar} or by use of impellers~\cite{GogateBeenackersPandit}. In 
these devices suspended particles enter and exit a neighbourhood of a moving wall. Since 
the fluid flow typically becomes ergodic from the boundaries, the entry-exit properties 
may play a key role for the mixing of suspended particles which move different from the 
flow, in particular near the boundaries. Other systems of interest are fluidized 
beds~\cite{XuYu} with wall effects becoming important in micro-fluidized beds~\cite{LiuXuGao}. 
Finally, dynamical entry-exit problems should be of importance for the deposition of 
evaporating micro-droplets in the human airways~\cite{ZhangKleinstreuerKimCheng}.

\section{Basic Mathematical Framework}
\label{sec:basic}

Consider the ordinary differential equation (ODE)
\be
\label{eq:ODE}
\frac{\txtd z}{\txtd t}=:z'=h(z(t),t),\qquad z\in\R^d,~t\in\R,
\ee
for $d\geq 2$ and with initial condition $z(t_0)=:z_0\in \cK$, where $\cK$ is a
compact subset of $\R^d$. We always assume that $h$
is smooth, i.e., $C^\I=C^\I(\R^{d+1},\R^d)$; this assumption could be weakened
but it will be convenient for the applications we have in mind. Denote the flow
associated with~\eqref{eq:ODE} by
\be
\phi_{t_0}^t:z_0\mapsto z(t)=z(t;t_0,z_0).
\ee
Let $\cM(t)\subset \R^d$ be an $m$-dimensional smooth invariant manifold with
\benn
\phi_{t_0}^t(\cM(t_0))\subset \cM(t),\qquad \forall t\in[t_1,t_2]=:\cI,
\eenn
for some $t_1<t_2$, with $t_0,t\in \cI$, and for $t\geq t_0$; we only
consider the dynamics on the compact time interval $\cI$ as we are interested
in finite-time non-autonomous dynamics. Furthermore, we emphasize that we 
do {\it not} require any particular type of invariant manifold such as normally 
hyperbolic~\cite{Fenichel4} or normally elliptic. However, one has to pick a 
relevant invariant manifold (e.g.~a domain boundary, a special co-dimension 
one surface, an interface between two physically/biologically relevant regions).
Here we do not discuss this choice as it does depend upon the application but
we refer to the outlook in Section \ref{sec:outlook}, where one goal would be
to try our methods for different definitions of Lagrangian coherent structures.

Let $\txtT_p\cM(t)$ and $\txtN_p\cM(t)$ denote the tangent 
and normal spaces at $p$ to
$\cM(t)$. Let $\gamma=\gamma(t;t_0,x_0)$ denote a trajectory in $\cM(t)$, i.e., we
require $\gamma(t)\subset \cM(t)$ for all $t\in \cI$. Suppose $\gamma$ is smooth,
i.e., $\gamma\in C^\I(\cI\times \cI\times \cK,\R^d)$. Then we define the function
\be
t\mapsto \cF(t)=\tilde{\cF}(\gamma(t;t_0,z_0)), \qquad \cF:\cI\ra \R,
\ee
where the choice for $\tilde{\cF}$ respectively the construction of $\cF$ will be 
discussed below. As yet, $\cF$ is quite a general function as it just maps a
time $t$ to $\R$. The idea is to \emph{associate the location of zeros of
$\cF$ with certain balancing properties of trajectories contained in $\cM(t)$,
to relate the entrance and exit points for particles moving near $\cM(t)$}. We 
emphasize that we only need a reference trajectory $\gamma(t)\subset \cM(t)$, not
the entire manifold $\cM(t)$ for our calculation.

\subsection{Instantaneous Eigenvalues}
\label{ssec:evalinst}

A naive first guess to study the dynamical properties near $\cM(t)$ is to
consider instantaneous eigenvalues. Consider the non-autonomous linear system
\be
Z'=[\txtD_z h(\gamma(t;t_0,z_0))]Z=:A(t;t_0,z_0)Z,
\ee
for $Z\in\R^d$, $A(t)=A(t;t_0,z_0)\in\R^{d\times d}$.
Let $\lambda_j=\lambda_j(t;t_0,z_0)$ for $j\in\{1,2,\ldots,d\}$ denote the eigenvalues of
$A(t)$. One guess, how to balance attraction and repulsion
near $\cM(t)$ is to consider
\be
\label{eq:defFlam}
\cF_{\lambda_j}(t):=\int_{t_0}^t\Re(\lambda_j(s))~\txtd s,
\ee
where $\Re(\cdot)$ denotes the real part of a complex number.
The problem with just monitoring eigenvalues is that they do not take into
account direction, i.e., we just get undirected rates of growth and decay.
Furthermore, the eigenvalues for fixed times do usually not imply any
stability statements for non-autonomous dynamical systems. Therefore, we
expect that the direct use of eigenvalues is insufficient in some many
cases and we shall demonstrate these issues in Section~\ref{ssec:solidbody}.

\subsection{Fast Subsystem Eigenvalues}
\label{ssec:fastslow}

Another idea is to consider multiple time scale dynamics and use fast subsystem
eigenvalues~\cite{KuehnBook}. Suppose~\eqref{eq:ODE} can be written in the standard fast-slow form
\be
\label{eq:fs1}
\begin{array}{rcl}
\varepsilon \frac{\txtd x}{\txtd \tau}  &=& \tilde{f}(x,\tilde{y},\tau,\varepsilon), \\
\frac{\txtd \tilde{y}}{\txtd \tau}  &=& g(x,\tilde{y},\tau,\varepsilon),
\end{array}
\ee
where $(x,\tilde{y})\in\R^{m+\tilde{n}}$, the maps $f:\R^{m+\tilde{n}+2}\ra \R^m$
and $g:\R^{m+\tilde{n}+2}\ra \R^m$ are smooth (we again require
$C^\I$ without further notice), let $h=(f,g)^\top$, and $0<\varepsilon\ll 1$.
Note that we can make the system autonomous by setting $\frac{\txtd \tau}{\txtd s}=1$.
This is equivalent to appending another slow variable $\dot{\tilde{y}}_{\tilde{n}+1}=1$,
so we shall now restrict to
\be
\label{eq:fs2}
\begin{array}{rcrcl}
\varepsilon \frac{\txtd x}{\txtd s} &=& \varepsilon \dot{x} &=& f(x,y,\varepsilon), \\
\frac{\txtd y}{\txtd s} &=& \dot{y} &=& g(x,y,\varepsilon),
\end{array}
\ee
where $z:=(x,y)\in\R^{m+n}$ for $n=\tilde{n}+1$. The critical manifold of~\eqref{eq:fs2}
is given by
\be
\cC_0:=\{(x,y)\in\R^{m+n}:f(x,y,0)=0\}.
\ee
Suppose $\cC_0$ is a smooth manifold which coincides with $\cM(t)$ on the given
domain for $t$, i.e., for $y_n$. Recall that $\cC_0$ is called normally hyperbolic
if the eigenvalues of the matrix $\txtD_x f(p,0)\in\R^{m\times m}$ have no zero
real part~\cite{KuehnBook}. Letting $\varepsilon\ra 0$ in~\eqref{eq:fs2} yields the slow 
subsystem
\be
\label{eq:ss}
\begin{array}{rcl}
0 &=& f(x,y,0), \\
\frac{\txtd y}{\txtd s} &=& g(x,y,0),
\end{array}
\ee
for the dynamics on the critical manifold. As before, consider
a trajectory $\gamma(s)=\gamma(s;s_0,z_0)$ contained in $\cC_0$. Then we can consider
the eigenvalues $\rho_j(s)$ for $j\in\{1,2,\ldots,m\}$ of the matrix $\txtD_x f(\gamma(s),0)$
and define
\be
\label{eq:defFrho}
\cF_{\rho_j}(t):=\int_{t_0}^t\Re(\rho_j(s))~\txtd s
\ee
to study the local attraction and repulsion rates near $\cC_0$. Note that there 
is still a choice of the neighbourhood $\cE$ of $\cC_0$, which can be selected to
be of size $\cO(\varepsilon)$ according to Fenichel's Theorem~\cite{Jones}
near the normally hyperbolic parts and has to be adapted to scalings of fast
subsystem bifurcation points; see~\cite[Ch.7-8]{KuehnBook}.

The functions $\cF_{\rho_j}$ are natural generalizations of the classical entry--exit maps
developed first in the context of delayed Hopf bifurcation~\cite{Neishtadt1,Neishtadt2};
in fact, the delayed Hopf case may also have buffer points so that the entry-exit
map does not give the correct exit point by itself. Since we do not consider any
oscillatory instabilities in this paper, we will not be further concerned with this
problem but see Section~\ref{sec:outlook}. However, since rigorous proofs for 
branch points are available~\cite{Schecter} (and references in~\cite{KuehnBook}), it
is important to have the fast-slow approach as a comparative benchmark. Furthermore, 
fast-slow techniques have been successfully applied to fluid dynamics of particles
(see~e.g.~\cite{RubinJonesMaxey}). The potential disadvantage of the fast-slow construction 
is that the ODE~\eqref{eq:ODE} is usually not directly available in the form~\eqref{eq:fs2}. 
This generically requires identifying a new parameter $\varepsilon$ and moving the invariant 
manifold $\cM(t)$ into a form where it becomes a slow manifold. In addition, not every
system has a well-defined time-scale separation.

\subsection{Finite-Time Lyapunov Exponent(s)}
\label{ssec:FTLE}

Another classic concept to deal with spectral properties of dynamical systems
defined on finite-time intervals are finite-time Lyapunov exponents (FTLEs); see
e.g.~\cite{LekienRoss,Haller5} for introductions. The intuition of FTLEs is to
consider a perturbation $\zeta\in\R^d$ to an initial condition for the flow
\be
\label{eq:FTLEidea}
\phi_{t_0}^{t}(z_0+\zeta)=\phi_{t_0}^{t}(z_0)+[\txtD\phi_{t_0}^{t}]\zeta
+\cO(\|\zeta\|^2),\qquad \text{as $\|\zeta\|\ra 0$,}
\ee
where $\|\cdot\|$ always denotes the Euclidean norm, the linearized flow map is
\be
\txtD \phi_{t_0}^t:\txtT\R^d\ra \txtT \R^d,\quad
(p,v)\mapsto (\phi_{t_0}^t(p),[\txtD_{z_0}z(t;t_0,p)]v),
\ee
and $\txtT \R^d\simeq \R^d$ denotes the tangent bundle to $\R^d$. Hence, the
expansion~\eqref{eq:FTLEidea} shows that the linearized flow map characterizes
local separation and contraction properties. Then one may define the maximum FTLE
as
\be
l_{\max}(t;t_0,z_0)=\frac{1}{|t-t_0|}\ln\left(
\max_{\|\zeta\| \neq 0}\frac{\|[\txtD \phi_{t_0}^{t}]\zeta\|}{\|\zeta\|}\right).
\ee
In addition, one may also consider FTLEs associated to specific directions. This
construction is conveniently expressed by considering the fundamental solution
$\Phi(t;t_0,z_0)\in\R^{d\times d}$ for $Z'=A(t)Z$ and letting $\delta_j=\delta_j(t;t_0,z_0)$
denote the singular values of $\Phi(t;t_0,z_0)$. Then one may define the associated
FTLEs~\cite{DoanKarraschNguyenSiegmund,ShaddenLekienMarsden} as
\be
l_j=l_j(t;t_0;z_0):=\frac{1}{t-t_0}\ln \delta_j(t;t_0,z_0).
\ee
Note carefully that FTLEs are dependent upon the final time $t$ and do not
constitute an infinitesimal notion. Since FTLE are defined over two time points,
$t_0$ and $t$, one possibility would be to just define balance functions by
\be
\cF_{l_j}(t):=l_j(t;t_0;z_0).
\ee
However, as the definition already suggests, the computation of FTLEs is
frequently impossible analytically and very challenging
numerically. Furthermore, FTLEs do not directly take into account the
existence of the invariant manifold $\cM(t)$ as the calculation works for any trajectory
in phase space via the variational equation.

\subsection{NILE Exponent}
\label{ssec:NILE}

Here we briefly recall the theory of normal infinitesimal Lyapunov exponents (NILE)
from~\cite{HallerSapsis1} in the context of~\eqref{eq:ODE}. Let
\be
\Pi_p^t:\txtT_p \R^d=\txtT_p\cM(t)\oplus \txtN_p\cM(t)\ra \txtN_p\cM(t),
\qquad \Pi_p^t(u,v)=v
\ee
denote the natural projection onto the normal space at each $p\in\cM(t)$. Then the
NILE at $p\in\cM(t)$ is defined by
\be
\label{eq:defNILE}
\sigma(p;t):=\lim_{s\ra 0^+}\frac1s \ln\left\|\left.\Pi^{t+s}_{\phi^{t+s}_t(p)}
[\txtD \phi_t^{t+s}]\right|_{\txtN_p\cM(t)}\right\|.
\ee
Although the definition may look complicated, $\sigma(p;t)$ is just the
infinitesimal growth rate in the normal direction to $\cM(t)$ at a point $p$.
Essentially, $\sigma(p;t)$ was designed in~\cite{HallerSapsis1} to be a more
computable measure of growth and decay rates for invariant manifolds in comparison
to the classical Lyapunov-type numbers~\cite{Fenichel1}. In particular, for a
smooth unit normal vector field $n(p;t)$ to $\cM(t)$ one has
\be
\label{eq:compn}
\sigma(p;t)=\max_{n(p,t)\in\txtN_p \cM(t)}n(p,t)^\top [\txtD_z h(p,t)]n(p,t)
\ee
by~\cite[Thm.4]{HallerSapsis1}. Of course, one could use~\eqref{eq:compn} as a
\emph{definition} instead of~\eqref{eq:defNILE}. In the
case when $\cM(t)$ can be written as a graph, one may consider $z=(x,y)\in\R^{m+n}$
and the ODE
\be
\begin{array}{lcl}
x'(t) &=& f(x(t),y(t),t),\\
y'(t) &=& g(x(t),y(t),t).
\end{array}
\ee
Suppose $\cM(t)=\{x=m_\cM(y,t)\}$ and define
\be
\Gamma(y,t):= \frac{\partial f}{\partial x}(m(y,t),y,t)-
\frac{\partial m_\cM}{\partial y}(y,t)~\frac{\partial g}{\partial x}(m(y,t),y,t),
\ee
then one may conclude~\cite[Thm.4]{HallerSapsis1} that
\be
\label{eq:NILE_mat}
\sigma(z,t)=\lambda_{\max} [\Gamma(y,t)+\Gamma(y,t)^\top]/2,
\ee
where $\lambda_{\max}[\cdot]$ is the largest eigenvalue of a symmetric matrix.
Considering the fact that $\sigma(p,t)<0$ corresponds to an attracting manifold
$\cM(t)$ while $\sigma(p,t)>0$ corresponds to a repelling one, we introduce the
definition
\be
\label{eq:Fsigma}
\cF_\sigma(t):=\int_{t_0}^t \sigma(\gamma(s;t_0,z_0),s)~\txtd s.
\ee
The advantage of NILE is that it does measure the generic (i.e., typical) maximum
growth and minimum decay rates for repelling, respectively attracting, manifolds. Furthermore,
it does take into account the geometry by focusing on the normal direction to the manifold
$\cM(t)$, which is the one relevant for entry and exit of neighbourhoods of $\cM(t)$. We 
again remark that we have to select the neighbourhood $\cE(t)$ so that the linearization
approximation in~\eqref{eq:Fsigma} is sufficiently accurate. In comparison to the 
fast-slow definition~\eqref{eq:defFrho} where natural scales in $\varepsilon$ appear, the
scales for NILE are related to uniform bounds on the normal directions and the requirement
that the leading linear normal direction is not dominated by nonlinear terms in $\cE(t)$. As
a practical strategy it seems safest to start with a neighbourhood $\cE(t)$ of very small
volume, where the approximation must be valid and gradually increase the neighbourhood. 
In particular, one may extend it until the linear approximation differs 
on several small compact test subsets from the full flow for a given tolerance.\medskip

Equation~\eqref{eq:NILE_mat} in conjunction with~\cite{DoanKarraschNguyenSiegmund} shows 
that the NILE concept is related to D-hyperbolicity~\cite{BergerDoanSiegmund1}, i.e., the 
generalization of the existence of an exponential dichotomy for finite-time intervals. 
Thereby, NILE also relates to other notions of spectra and hyperbolicity for non-autonomous 
systems on finite time intervals as discussed in~\cite{Berger,DoanKarraschNguyenSiegmund,Karrasch1}. 
In this context one usually obtains spectral intervals~\cite{Rasmussen1,Karrasch1}
and one could aim to use these intervals to define new balance functions.
However, for our purposes we aim to find a concept, which is analytically and conceptually
simple in low-dimensional examples and stays numerically computable for more complicated
flows. We shall see in several examples that a balance function based on NILE satisfies
these requirements quite well.

\subsection{General Balance Functions}
\label{ssec:bf}

We develop some basic general theory for the class of balance functions $\cF$ defined
above. Each function depends not only on its argument $T\in\R$ but also on the inital
time $t_0$, and the initial point $z_0$. Therefore, we also consider
\be
\cG_\upsilon:\cI\times \cI \times \cK \ra \R,\qquad \cG_\upsilon(t,t_0,z_0)=\cF_\upsilon(t)
\ee
using the $\cG$-notation to emphasize this dependence and with
$\upsilon\in\{\lambda_j,\rho_j,l_j,\sigma\}$; recall that $\lambda_j$ refers to 
instantaneous eigenvalues, $\rho_j$ to the fast-slow case, $l_j$ to finite-time Lyapunov
exponents (FTLEs), and $\sigma$ to the normal infinitesimal Lyapunov exponent (NILE). \medskip

\begin{prop}(basic properties)
\label{prop:basic}
 The following results hold:
 \begin{itemize}
  \item $\cG_{\upsilon}(t_0,t_0,z_0)=0$, for $\upsilon\in\{\lambda_j,\rho_j,\sigma\}$;
  \item $\lim_{t\ra t_0} \cG_{l_j}(t,t_0,z_0)$ exists;
  \item $\cG_\upsilon$ is bounded for $\upsilon\in\{\lambda_j,\rho_j,\sigma,l_j\}$.
 \end{itemize}
\end{prop}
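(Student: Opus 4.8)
The three assertions can be handled independently, and I would dispatch the first one immediately. The balance functions $\cF_{\lambda_j}$, $\cF_{\rho_j}$ and $\cF_\sigma$ defined in~\eqref{eq:defFlam},~\eqref{eq:defFrho} and~\eqref{eq:Fsigma} are all of the form $\int_{t_0}^t(\cdots)\,\txtd s$, so setting $t=t_0$ collapses the interval of integration to a point and each vanishes; hence $\cG_\upsilon(t_0,t_0,z_0)=0$ for $\upsilon\in\{\lambda_j,\rho_j,\sigma\}$ by nothing more than the convention $\int_{t_0}^{t_0}=0$. The FTLE balance function $\cF_{l_j}$ is deliberately excluded here because it is not defined by such an integral, which is exactly why it is treated separately in the second assertion.

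For the second assertion the difficulty is that $\cG_{l_j}(t,t_0,z_0)=\frac{1}{t-t_0}\ln\delta_j(t;t_0,z_0)$ is an indeterminate $0/0$ form as $t\ra t_0$: since the fundamental solution satisfies $\Phi(t_0;t_0,z_0)=\mathrm{Id}$, every singular value obeys $\delta_j(t_0)=1$, so $\ln\delta_j(t_0)=0$, while the prefactor diverges. The plan is to extract the limit from a first-order expansion of the singular values. I would introduce $M(t):=\Phi(t)^\top\Phi(t)$, whose eigenvalues are the $\delta_j^2$, and differentiate the variational equation $\Phi'=A(t)\Phi$ to get $M'(t)=\Phi(t)^\top[A(t)+A(t)^\top]\Phi(t)$; evaluating at $t_0$ gives $M(t_0)=\mathrm{Id}$ and $M'(t_0)=A(t_0)+A(t_0)^\top$. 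Because $M(t_0)$ is a scalar multiple of the identity, the entire space is the degenerate eigenspace, so first-order symmetric perturbation theory yields $\delta_j^2(t)=1+(t-t_0)\kappa_j+o(t-t_0)$, with $\kappa_j$ the eigenvalues of $A(t_0)+A(t_0)^\top$; taking logarithms gives $\ln\delta_j(t)=\tfrac12(t-t_0)\kappa_j+o(t-t_0)$ and therefore $\lim_{t\ra t_0^+}\cG_{l_j}=\tfrac12\kappa_j$, an eigenvalue of the symmetric part of $A(t_0)$. The hard part will be precisely this step, because at $t=t_0$ all singular values collide, so the individually ordered $\delta_j$ are in general only one-sided differentiable there; I would therefore read the limit as the forward-time limit $t\ra t_0^+$ consistent with the $t\ge t_0$ convention of the setup, and invoke the differentiability of parametrized eigenvalues of a smooth symmetric matrix family through a degenerate crossing to guarantee that this one-sided limit exists.

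For the third assertion I would separate the integral-defined functions from $\cG_{l_j}$. For $\upsilon\in\{\lambda_j,\rho_j,\sigma\}$ it suffices to bound the integrand on the compact interval $\cI$ and multiply by its length: the estimate $|\Re(\lambda_j(s))|\le\|A(s)\|$ (and the analogous bound for $\rho_j$) together with continuity of $s\mapsto\|A(s)\|$ on $\cI$ — which follows from smoothness of $h$ and $\gamma$ — gives a finite supremum, and the representation~\eqref{eq:compn} yields $|\sigma|\le\|\txtD_z h\|$ on the compact image of $\gamma$. Thus $|\cG_\upsilon|\le(t_2-t_1)\sup_{\cI}|(\cdots)|<\I$. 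For $\upsilon=l_j$ I would instead use continuity and compactness: a Grönwall estimate applied to the variational equation bounds $\|\Phi(t)\|$ and $\|\Phi(t)^{-1}\|$ on $\cI$, so the singular values satisfy $0<c\le\delta_j(t)\le C<\I$ and $\ln\delta_j$ is bounded; for $t$ bounded away from $t_0$ the factor $1/(t-t_0)$ is bounded, while near $t_0$ the finite limit from the second assertion shows that $\cG_{l_j}$ extends continuously across $t_0$. Hence $\cG_{l_j}(\cdot,t_0,z_0)$ is continuous on the compact interval $\cI$ and bounded by the extreme value theorem.
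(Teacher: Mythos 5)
Your proposal is correct, and for the first and third bullets it follows essentially the paper's route: the vanishing at $t=t_0$ is immediate from the integral definitions, and boundedness comes from continuity of the data on the compact sets $\cI$ and $\cK$ (you are more explicit than the paper, which dismisses the FTLE case with ``the other cases are equally easy''; your Gr\"onwall bound on $\|\Phi\|$ and $\|\Phi^{-1}\|$ plus continuous extension through $t_0$ fills that in cleanly). The genuine divergence is in the second bullet, and there your route is arguably sounder than the paper's. The paper writes the fundamental solution as $\Phi(t,0,z_0)=\txte^{\int_0^t A(s)\,\txtd s}$, which is valid only when the matrices $A(s)$ commute at different times; as stated it is a heuristic, salvageable only because to first order in $t$ the Magnus-type correction is $\cO(t^2)$, a point the paper does not address. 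You instead work with the Gram matrix $M(t)=\Phi(t)^\top\Phi(t)$, whose eigenvalues are exactly the $\delta_j^2$, and expand $M(t)=\mathrm{Id}+(t-t_0)\bigl[A(t_0)+A(t_0)^\top\bigr]+o(t-t_0)$. This sidesteps the non-commutativity issue entirely, and your worry about the singular-value collision at $t_0$ resolves even more easily than you suggest: since the unperturbed matrix is the identity, Weyl's inequality gives $\mu_j(M(t))=1+(t-t_0)\kappa_j+o(t-t_0)$ for the ordered eigenvalues directly, with no need to invoke smooth-eigenvalue selection through a degenerate crossing. As a bonus, your argument identifies the limit explicitly as $\kappa_j/2$, an eigenvalue of the symmetric part of $A(t_0)$ --- which is pleasantly consonant with the NILE formula~\eqref{eq:NILE_mat} --- whereas the paper only asserts existence via the vaguer expansion $\delta_j=\txte^{t[a_0+o(1)]}$. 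Your one-sided reading $t\ra t_0^+$ is consistent with the paper's convention $t\geq t_0$ and with its own proof.
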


\begin{proof}
The first statement is trivial due to the integral definition of the balance
function for $\{\lambda_j,\rho_j,\sigma\}$. For the second statement, it suffices
to study $t_0=0$ and the remaining cases will follow by a shift. Since $\gamma\in C^1$
it follows that $A(t)$ is $C^1$ in $t$ so $A(t)=A_0+tA_1+o(t)$ by Taylor's Theorem.
For the fundamental matrix we have
\be
\Phi(t,0,z_0)=\txte^{\int_0^t A(s)~\txtd s}=\txte^{t[A_0+o(t)]}.
\ee
So the singular values $\delta_j$ of $\Phi(t,0,z_0)$ satisfy
$\delta_j=\txte^{t[a_0+o(1)]}$ and the second result follows. The last
statement can be deduced from $\gamma\in C^1$; for example, consider
$\upsilon=\lambda_j$, then $A(t)$ is $C^1$ and defined on the compact set $\cI$
so the eigenvalues are bounded on $\cI$, and so are their real parts. Integrating
a bounded function over a compact set again yields a bounded function. Boundedness
with respect to the initial point follows from the compactness of $\cK$. The
other cases are equally easy.
\end{proof}\medskip

We remark that the last proof only used continuous differentiability.
Recall that we are primarily interested in zeros of balance functions
$t\mapsto \cG(t,t_0,z_0)=\cF(t)$. The existence of zeros is a global dynamical
problem and has to be considered on a case-by-case basis. However, the non-degeneracy
of a zero is a local property. Non-degeneracy of a nontrivial zero $T> t_0$
yields a transversality condition for the motion near the exit point, i.e., we
expect a true exit and not just sliding near the exit boundary. Therefore,
we have to study differentiability properties of $\cG$, particularly with respect
to $t$. To state the next result, we say that a family of matrices is NIC
(no-infinite-contacts)~\cite{Rainer} if none of its eigenvalues meet with an
infinite order of contacts when $t$ is varied, i.e., roots of the characteristic
polynomial have well-defined finite multiplicities for all $t$.\medskip

\begin{prop}(differentiability)
Consider $t_0,t$ in the interior of $\cI$ and $z_0\in\cK$. Then the following hold:
\begin{itemize}
 \item[(D1)] $t\mapsto \cG_{\lambda_j}(t;t_0,z_0)$ is $C^1$; if
 $A(s;t_0,z_0)$ is normal and NIC for all $s\in \cI$ then
 $(t,t_0,z_0)\mapsto\cG_{\lambda_j}(t,t_0,z_0)$ is $C^\I$;
 \item[(D2)] $t\mapsto \cG_{\rho_j}(t;t_0,z_0)$ is $C^1$; if
 $\txtD_xf(\gamma(s;t_0,z_0),0)$ is normal and NIC for all $s\in\cI$
 then $(t,t_0,z_0)\mapsto\cG_{\rho_j}(t,t_0,z_0)$ is $C^\I$;
 \item[(D3)] if $AA^\top$ is NIC then
 $(t,t_0,z_0)\mapsto \cG_{l_j}(t;t_0,z_0)$ is $C^\I$;
 \item[(D4)] suppose $\cM$ can be written globally as a graph of a
 $C^\I$-function, and $\Gamma(y,s)$ is NIC for all $s\in\cI$ then
 $(t,t_0,z_0)\mapsto \cG_{\sigma}(t;t_0,z_0)$ is $C^\I$;
\end{itemize}
\end{prop}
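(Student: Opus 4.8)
The plan is to reduce everything to smoothness of the integrand in \eqref{eq:Fsigma} and then apply the fundamental theorem of calculus together with the Leibniz rule. Writing $\cG_\sigma(t,t_0,z_0)=\int_{t_0}^t \psi(s,t_0,z_0)\,\txtd s$ with $\psi(s,t_0,z_0):=\sigma(\gamma(s;t_0,z_0),s)$, the upper and lower limits contribute only smoothly, so the whole question is whether $\psi$ is $C^\I$ jointly in $(s,t_0,z_0)$. If so, then $\partial_t\cG_\sigma=\psi(t,t_0,z_0)$, the $t_0$-derivative equals $-\psi(t_0,\cdot)$ plus an integral of $\partial_{t_0}\psi$, and all higher derivatives are controlled by differentiating under the integral sign over the compact set $\cI\times\cI\times\cK$.

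For the smoothness of $\psi$ I would assemble three ingredients. First, since $h\in C^\I$, the flow, and hence the reference trajectory $\gamma(s;t_0,z_0)\subset\cM(s)$, depends smoothly on $(s,t_0,z_0)$ by the standard smooth dependence on initial data and on the initial time. Second, the hypothesis that $\cM$ is globally a $C^\I$-graph $\{x=m_\cM(y,t)\}$ makes $m_\cM$ and its $y$-derivative smooth, so by the formula for $\Gamma$ preceding \eqref{eq:NILE_mat} the map $(y,t)\mapsto \Gamma(y,t)$, and therefore its symmetric part $S:=(\Gamma+\Gamma^\top)/2$, is $C^\I$. Third, by \eqref{eq:NILE_mat} we have $\sigma=\lambda_{\max}[S]$, so $\psi$ is the composition of the smooth maps $(s,t_0,z_0)\mapsto(\gamma,s)\mapsto S$ with the scalar map $S\mapsto\lambda_{\max}[S]$ on symmetric matrices. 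Everything thus reduces to the regularity of this last map along $S(\gamma(s;t_0,z_0),s)$.

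The one genuinely delicate point, and the main obstacle, is the smoothness of $\lambda_{\max}[\cdot]$. On the open locus of symmetric matrices whose largest eigenvalue is simple, $\lambda_{\max}$ is real-analytic, since it is then an isolated simple root of the characteristic polynomial and the implicit function theorem applies; there $\psi$ is immediately $C^\I$ as a composition. The NIC hypothesis, which I would apply to the symmetric part $S$ whose eigenvalues actually define $\sigma$, is what is meant to rescue the degenerate locus: along the $s$-curve of symmetric matrices $S(\gamma(s),s)$ the absence of infinite-order contact lets me invoke the same perturbation theory used already in (D1) (cf.~\cite{Rainer}) to choose the eigenvalues as $C^\I$ branches $\mu_1(s),\dots,\mu_m(s)$. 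The remaining care is that $\sigma=\max_j\mu_j$, and the pointwise maximum of smooth functions need not be smooth: for instance $S(s)=\mathrm{diag}(s,-s)$ is NIC yet $\lambda_{\max}=|s|$. Hence NIC controls only the individual branches, not their upper envelope.

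To close this gap I would show that under the stated hypotheses the top branch is \emph{distinguished}, i.e.\ the largest eigenvalue of $S$ stays simple (of constant multiplicity) along the trajectory, so that $\lambda_{\max}$ coincides locally with a single analytic branch $\mu_{j^\ast}$ and $\psi$ is $C^\I$. This simplicity is the natural finite-time analogue of a spectral gap, and it is also what restores \emph{joint} smoothness in $(t_0,z_0)$: one-parameter branch selection does not by itself give regularity transverse to the $s$-direction, whereas $\lambda_{\max}$ on the simple locus is a genuine smooth matrix function that composes cleanly with the smooth map $(s,t_0,z_0)\mapsto S$. The expected hard part is therefore neither the integral nor the composition bookkeeping, but precisely verifying that the leading NILE direction does not undergo a finite-order crossing with the next eigenvalue; the NIC condition removes the pathological infinite-order case and reduces the problem to exactly this simplicity of the maximal eigenvalue.
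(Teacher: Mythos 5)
Your attempt only really proves item (D4): you reduce everything to smoothness of the integrand of \eqref{eq:Fsigma} plus the Leibniz rule, and (D1)--(D3) are never argued (you invoke (D1) as if already available). Relative to the full proposition this is a genuine omission; note in particular that (D3) does not fit your reduction at all, since $\cG_{l_j}$ is not the integral of a local rate: the paper's proof treats it separately by observing that $AA^\top$ is normal, using NIC to select the singular values $\delta_j$ smoothly, and then reusing the Taylor-expansion step from Proposition~\ref{prop:basic} to control the factor $\frac{1}{t-t_0}\ln\delta_j$ near $t=t_0$. None of this is recoverable from your outline.

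On (D4) itself, your diagnosis is actually sharper than the paper's proof, which disposes of the case ``by the same arguments as (D1)--(D2)'', i.e.\ as if $\lambda_{\max}$ were a single smoothly selected eigenvalue branch. You are right on two counts: NIC should be imposed on the symmetric part $S=(\Gamma+\Gamma^\top)/2$, whose eigenvalues are what enter \eqref{eq:NILE_mat}, not on $\Gamma$; and NIC only yields smooth branches $\mu_j(s)$, while $\sigma=\max_j\mu_j$ is their upper envelope, which need not be smooth. Your example $S(s)=\mathrm{diag}(s,-s)$ is even realizable in the paper's framework (take $\cM=\{x=0\}$, $f=(yx_1,-yx_2)^\top$, $\dot y=1$, so $\Gamma=\mathrm{diag}(s,-s)$ along $\gamma$), giving $\sigma=|s|$ and $\cG_\sigma(t,0,z_0)=t|t|/2$, which is $C^1$ but not $C^2$. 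However, your proposed repair does not close the gap you found: you say you ``would show that under the stated hypotheses the top branch is distinguished,'' and this is impossible --- the very same example satisfies every stated hypothesis of (D4) while the top eigenvalue undergoes a finite-order crossing. Simplicity (or at least constant multiplicity) of $\lambda_{\max}[S]$ along $\gamma$ is an \emph{additional} hypothesis, not a consequence; once it is assumed, your composition argument does go through, and in a cleaner way than the paper's, since on the simple locus $\lambda_{\max}$ is an analytic function of the matrix entries, so one obtains joint $C^\I$-dependence on $(t,t_0,z_0)$ directly, without the variable-by-variable appeal to the one-parameter selection results of~\cite{Rainer}. Without that extra assumption, only the $C^1$-statement for $t\mapsto\cG_\sigma$ can be claimed.
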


\begin{proof}
Regarding (D1), differentiability in $t$ is immediate from the Leibniz integral
formula, which yields a continuous derivative. For smoothness in the
initial data, consider first $t_0$. Note that $A=A(s;t_0,z_0)$ is a $C^\I$ function
of $t_0$ by assumption on the smoothness of the vector field and the assumption
that $\gamma\in C^\I$. Since $A$ is normal, NIC, and smooth,
it follows that its eigenvalues $\lambda_j=\lambda_j(s;t_0,z_0)$ are $C^\I$ as
functions of $t_0$~\cite[Thm.7.8]{Rainer}. Therefore, the corresponding real parts
are $C^\I$ as well. Using the Leibniz integral formula,
we obtain smoothness with respect to $t_0$. The same
argument can now be applied to each of the coordinates of $z_0$ and (D1) follows.
(D2) is immediate as the proof from (D1) carries over. For (D3), the
first observation is that $AA^\top$ is a normal matrix. Since $AA^\top$ is also NIC, 
the singular values are $C^\I$; now we can just use the same Taylor expansion argument 
as in the proof of Proposition~\ref{prop:basic} to get that
$l_j$ is $C^\I$. For (D4), note that since $\cM$ can be written as a $C^\I$-graph,
it follows that $\Gamma(y,t)$ is $C^\I$, so using formula~\eqref{eq:NILE_mat}
the last result follows by the same arguments as (D1)-(D2).
\end{proof}\medskip

The differentiability results can be improved but
selecting eigenvalue-type quantities smoothly is an extremely technical topic and
many special cases may occur~\cite{Rainer}. Here we are mainly interested in the
dependence of zeros of $\cF$ on the input data. A direct application
of the implicit function theorem yields:\medskip

\begin{cor}
\label{cor:sd}
Suppose $h\in C^\I$ and $\gamma\in C^\I$ for each trajectory and the assumptions
in (D1)-(D4) hold. Consider
$(t,t_0,z_0)\mapsto\cG_\upsilon(t,t_0,z_0)$ for
$\upsilon\in\{\lambda_j,\rho_j,\sigma,l_j\}$ and suppose $t\mapsto \cF_\upsilon(t)$
has an non-degenerate root at $T$, i.e., $\cF'(T)\neq 0$. Then there
exists a neighbourhood of $(T,t_0,z_0)$ and a locally unique continuous one-dimensional
curve of solutions to $0=\cG_\upsilon(t,t_0,z_0)$.
\end{cor}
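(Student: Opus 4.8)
The plan is to apply the implicit function theorem directly to the scalar equation $\cG_\upsilon(t,t_0,z_0)=0$, treating $t$ as the dependent variable and $(t_0,z_0)$ as parameters. The three ingredients the theorem requires are: (i) that $\cG_\upsilon$ be continuously differentiable in a neighbourhood of the base point $(T,t_0,z_0)$; (ii) that the equation be satisfied there, i.e.\ $\cG_\upsilon(T,t_0,z_0)=0$; and (iii) that the partial derivative of $\cG_\upsilon$ with respect to the solved-for variable $t$ be nonzero at the base point. I would verify each of these in turn.

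For (i), the joint smoothness of $(t,t_0,z_0)\mapsto\cG_\upsilon(t,t_0,z_0)$ is precisely what items (D1)--(D4) supply under the stated hypotheses; there $\cG_\upsilon$ is in fact shown to be $C^\infty$, so it is certainly $C^1$ near $(T,t_0,z_0)$, as required. For (ii), the hypothesis that $t\mapsto\cF_\upsilon(t)$ has a root at $T$, combined with the identity $\cG_\upsilon(T,t_0,z_0)=\cF_\upsilon(T)$, gives $\cG_\upsilon(T,t_0,z_0)=0$. For (iii), I would observe that for fixed $(t_0,z_0)$ the partial derivative $\partial_t\cG_\upsilon(T,t_0,z_0)$ coincides with $\cF_\upsilon'(T)$, which is nonzero by the non-degeneracy assumption; this is exactly the condition that singles out $t$ as the solvable variable.

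With these three checks in place, the implicit function theorem yields a neighbourhood $U$ of $(t_0,z_0)$ in $\cI\times\cK$ and a unique continuous (indeed $C^\infty$, matching the regularity of $\cG_\upsilon$) function $\tau\colon U\to\R$ with $\tau(t_0,z_0)=T$ and $\cG_\upsilon(\tau(s,w),s,w)=0$ for all $(s,w)\in U$; moreover the full zero set of $\cG_\upsilon$ near $(T,t_0,z_0)$ coincides with the graph of $\tau$. Restricting $\tau$ to any one-parameter variation of the initial data — for instance varying the entry time $t_0$ alone, or following the reference trajectory $\gamma$ in $\cK$ — then produces the locally unique continuous curve of solutions asserted in the statement, with the exit time depending continuously on the entry data.

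I do not expect a genuine obstacle here, since the conclusion is a textbook application of the implicit function theorem once the three ingredients are in hand. The only delicate point has already been dispatched upstream, namely establishing the joint differentiability of $\cG_\upsilon$ in the initial data $(t_0,z_0)$, which rests on the smooth selectability of eigenvalues and singular values guaranteed by the normality-plus-NIC hypotheses in (D1)--(D4). Absent those hypotheses one would retain only $C^1$ dependence in $t$ together with continuity in $(t_0,z_0)$, which still suffices to produce a continuous (rather than smooth) solution curve via the continuous form of the implicit function theorem, so the qualitative conclusion of the corollary is robust.
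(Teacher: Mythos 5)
Your proposal is correct and matches the paper's own argument, which likewise treats the corollary as a direct application of the implicit function theorem: the joint smoothness from (D1)--(D4) supplies the $C^1$ hypothesis, the root at $T$ supplies the base point, and $\cF'(T)\neq 0$ supplies the nonvanishing partial derivative in $t$. Your verification of the three hypotheses and the resulting locally unique solution curve is exactly the intended proof.
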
\medskip

The last result is not really surprising. It just states that if we find a zero
of the balance function, and this zero is isolated, then the $C^1$-dependence of
the balance function on its input data guarantees that the zero perturbs uniquely.
However, for applications it is of paramount importance to have at least some
computable conditions for robustness such as $\cF'(T)\neq 0$.\medskip

\textbf{Remark:} \textit{Regularity results are also relevant to study the dependence upon initial 
conditions such as sets of the form
\be
\label{eq:St}
\cS_t := \{ z_0 \in \mathcal{K} : \mathcal{F}(t) = 0\},
\ee
i.e., considering the evolution of co-dimension one submanifolds of initial conditions. 
Fixing a neighbourhood size of $\cM(t)$, makes $\cS_t$ depend only upon $z_0$. However, if 
$\cM(t)$ interacts with another invariant manifold then an initially smooth connected set
$\cS_t$ may break up. Furthermore, since $\cS_t$ are sets imposing balance conditions, it
would be natural ask, how this relates to Lagrangian coherent structures and we aim to
pursue this analysis in future work.}

\subsection{Perturbation Expansion}
\label{ssec:perturb}

Having reduced the entry-exit problem to a root-finding problem, we remark that it is now
possible to apply standard perturbation techniques if suitable boundedness and regularity
conditions are satisfied. For example, consider the case when we have to solve
\be
\label{eq:pert_main}
0=\cF(t;\delta)
\ee
for some small system parameter $\delta\geq 0$. If $\cF(T;0)=0$ and
$\frac{\partial \cF}{\partial t}(T;0)\neq 0$
then the implicit function theorem yields that we can locally solve~\eqref{eq:pert_main}
via $T=T(\delta)$ and formally can make the perturbation ansatz:
\be
T(\delta)=T_0+\delta T_1+\delta^2 T_2+\cdots,\qquad T_j\in\R,~0<\delta\ll1.
\ee
Furthermore, if~\eqref{eq:pert_main} depends upon a trajectory $\gamma(t)\subset \cM(t)$
which has a perturbation expansion
\be
\gamma(t)=\gamma_0(t)+\delta\gamma_1(t)+\delta^2\gamma_2(t)+\cdots
\ee
then we can just plug all terms into~\eqref{eq:pert_main} and formally expand in $\delta$,
collect terms of different orders, and aim to solve the problem perturbatively if the
perturbation problem is regular.

\section{Passive Models}
\label{sec:passive}

Here we shall consider two basic models that represent elementary
flows to benchmark the previously introduced concepts. 'Passive' refers to
the fact that particles inside these flows are viewed as passively transported
while Section~\ref{sec:creep} presents an 'active' case, where there is a
fully-coupled particle-fluid interaction.

\subsection{Regularized Solid-Body Rotation}
\label{ssec:solidbody}

We start with a basic example, which is nevertheless quite insightful as it is 
relatively easy from a computational perspective and demonstrates the basic features 
of balance functions. Consider the ODE
\be
\label{eq:rot1}
\begin{array}{lcl}
z_1'&=&-(z_2-\beta),\\
z_2'&=& z_1(1-\txte^{-\alpha z_2}),\\
\end{array}
\ee
where $(z_1,z_2)\in \R\times [0,+\I)$, and $\alpha,\beta>0$ are parameters.
An example of the phase portrait is shown in Figure~\ref{fig:02}. Note that the
center of rotation of the flow is at $(z_1,z_2)=(0,\beta)$. $\cM=\{y=0\}$
is a time-independent invariant manifold with dynamics $z_1'=\beta$. We are
interested in the entry-exit relation of a point particle getting passively
transported by the flow~\eqref{eq:rot1} in and out of a neighbourhood
\be
\cE(\chi):=\{(z_1,z_2)\in\R^2:z_2\in[0,\chi]\}.
\ee

\begin{figure}[htbp]
\psfrag{z1}{$z_1$}
\psfrag{z2}{$z_2$}
	\centering
		\includegraphics[width=0.7\textwidth]{./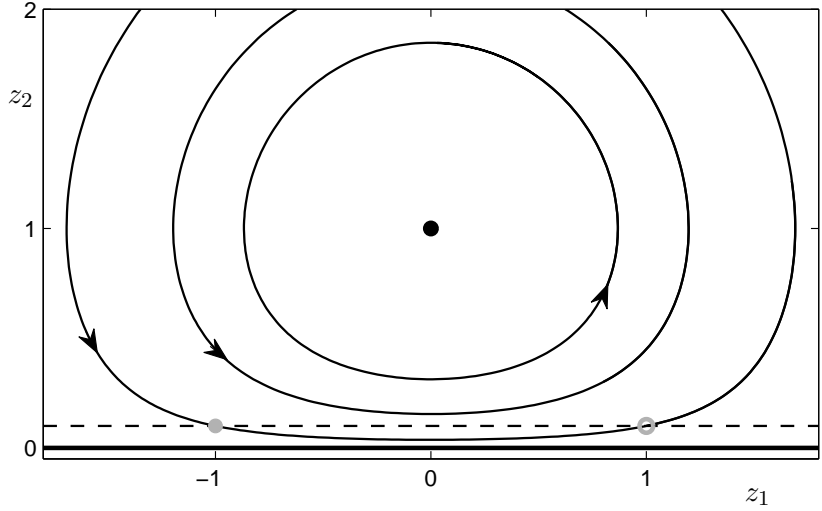}	
		\caption{\label{fig:02}Trajectories of the ODE~\eqref{eq:rot1} for parameters $\beta=1$, $\alpha=2$.
		The invariant manifold $\cM=\{z_2=0\}$ (thick black line) is a fixed wall with neighbourhood
		$\cE(\chi)$ (indicated by a dashed black line). The center of rotation at $(0,\beta)$ is marked by
		a black dot. A typical relationship between entry (grey dot) and
		exit (grey circle) of a particle transported by the flow is indicated as well.}
\end{figure}

Assume that $\chi>0$ is sufficiently small so we aim to approximate the
residence time of the particle from the linearized system near $\cM$. We expect
a symmetric entry-exit relation. Indeed, given the initial point
$z_0=(-b,\chi)\in\partial\cE(\chi)$, we know already that the first exit of
the point is given by $z_T=(b,\chi)$ due to the symmetry
\be
(z_1,z_2,t)\mapsto (-z_1,z_2,-t)
\ee
of~\eqref{eq:rot1}. We discard this fact temporarily and apply the different
functions $\cF$. We start by just naively considering the concept of
instantaneous eigenvalues discussed in Section~\ref{ssec:evalinst}.\medskip

\begin{prop}
\label{prop:bad_iev}
There exists an open set of parameters $\alpha,\beta,b>0$ such that the exit
computed by balancing $\cF_{\lambda_j}$ is correct, while there also exists
an open set of parameters where balancing $\cF_{\lambda_j}$ gives an incorrect
exit point.
\end{prop}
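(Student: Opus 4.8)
The plan is to reduce the correctness of the balance to the sign of a single scalar integral computed from the linearization. Writing the right-hand side of~\eqref{eq:rot1} as $h=(\beta-z_2,\,z_1(1-\txte^{-\alpha z_2}))$, the Jacobian $A=\txtD_z h$ has trace $\tau(z)=\alpha z_1\txte^{-\alpha z_2}$ — which is exactly the divergence $\nabla\cdot h$ — and determinant $\Delta(z)=1-\txte^{-\alpha z_2}>0$ everywhere in $\cE(\chi)$, so its eigenvalues are $\lambda_\pm=\tfrac12\bigl(\tau\pm\sqrt{\tau^2-4\Delta}\bigr)$ and the whole analysis is controlled by the discriminant $\mathcal D(z)=\tau(z)^2-4\Delta(z)$. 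On $\cM=\{z_2=0\}$ itself the linearization degenerates ($\Delta\equiv0$, one eigenvalue $\equiv0$) and the symmetry already forces the correct exit, so the informative instantaneous-eigenvalue balance is the one evaluated along the particle's transit arc inside $\cE(\chi)$; this is also where the direction-blindness criticized in Section~\ref{ssec:evalinst} becomes visible. First I would record the exact exit data: by the symmetry $(z_1,z_2,t)\mapsto(-z_1,z_2,-t)$ the orbit through $z_0=(-b,\chi)$ is odd in $z_1$ and even in $z_2$ about its lowest point $z_1=0$, so the true exit is $(b,\chi)$, reached at a time $T$ with $z_2(0)=z_2(T)=\chi$ and with $z_1$ increasing monotonically from $-b$ to $b$ (using $z_1'=\beta-z_2>0$ since $z_2\le\chi<\beta$ on the arc).

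The key reduction is the identity $\int_0^T\tau\,\txtd s=0$, which I would obtain either from the exact antiderivative $\tau=\frac{\txtd}{\txtd s}\ln(1-\txte^{-\alpha z_2})$ along a trajectory (substituting $z_1=z_2'/(1-\txte^{-\alpha z_2})$) evaluated between the equal endpoint heights, or conceptually from $\tau=\nabla\cdot h$ being odd in $z_1$ along a $z_1$-symmetric path. Consequently, whenever the eigenvalues stay complex on the entire transit one has $\Re(\lambda_+)=\Re(\lambda_-)=\tau/2$, hence $\cF_{\lambda_j}(T)=\tfrac12\int_0^T\tau\,\txtd s=0$ with $\cF_{\lambda_j}'(T)=\tau(z_T)/2=\tfrac12\alpha b\txte^{-\alpha\chi}\neq0$: the balance function has a non-degenerate zero exactly at the true exit, so the exit is computed correctly.

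To generate the two regimes I would control the sign of $\mathcal D$ along the arc as a function of $(\alpha,\beta,b)$ (with $\chi$ fixed and small). For $b$ small the orbit stays in a thin strip with $|z_1|\le b$ and, by the shallow-dip geometry of the level sets of the conserved quantity, $z_2$ bounded away from $0$; then $\tau^2=\cO(b^2)$ while $\Delta$ is bounded below, forcing $\mathcal D<0$ on the whole transit, i.e.\ the complex regime above and a correct exit on an open set. For $b$ large the endpoint value $\mathcal D(\pm b,\chi)=(\alpha b\txte^{-\alpha\chi})^2-4(1-\txte^{-\alpha\chi})$ is positive, so by continuity $\mathcal D>0$ on nonempty subarcs near $z_1=\pm b$. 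There $\Re(\lambda_{\max})=\tfrac12(\tau+\sqrt{\mathcal D})>\tau/2$ strictly, while $\Re(\lambda_{\max})=\tau/2$ on the complementary complex arc, so $\cF_{\lambda_{\max}}(T)=\int_0^T(\Re\lambda_{\max}-\tau/2)\,\txtd s>0$ by the identity; since $\Re\lambda_{\max}<0$ for $z_1<0$ and $>0$ for $z_1>0$, the function $\cF_{\lambda_{\max}}$ decreases then increases with a single nontrivial zero, which therefore lies strictly before $T$. The predicted exit thus has $z_1<b$ and is incorrect (the opposite-signed error appears for $\lambda_{\min}$). All inequalities are strict and depend continuously on $(\alpha,\beta,b)$, yielding two open parameter sets.

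The main obstacle I expect is the uniform control of $\mathrm{sign}\,\mathcal D$ over the \emph{entire} transit arc rather than only at its endpoints: proving $\mathcal D<0$ throughout in the small-$b$ regime needs a lower bound on $\Delta$, equivalently on the dip depth $z_2^{\min}$, which I would extract from the conserved quantity $H=\tfrac12 z_1^2+\int(z_2-\beta)/(1-\txte^{-\alpha z_2})\,\txtd z_2$ that pins the orbit through $(-b,\chi)$ and limits how far $z_2$ can fall. A secondary technical point is the continuous, consistent labelling of $\lambda_{\max},\lambda_{\min}$ across the real–complex transition at $\mathcal D=0$, so that $\Re\lambda_{\max}-\tau/2\ge0$ is a genuine inequality of continuous functions, strictly positive on the real subarc.
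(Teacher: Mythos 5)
There is a genuine gap here, and it is definitional rather than computational: you are balancing the wrong object. The function $\cF_{\lambda_j}$ in the proposition is the one from Section~\ref{ssec:evalinst}, i.e.\ equation~\eqref{eq:defFlam} with $\lambda_j(s)$ the eigenvalues of $A(s)=\txtD_z h(\gamma(s))$ along a reference trajectory that is \emph{required to lie in the invariant manifold}: $\gamma(t)\subset\cM(t)$. The paper's proof accordingly takes $\gamma(t)=(\beta t-b,0)^\top\subset\cM$, works with the matrix $A$ along that trajectory, whose eigenvalues $\lambda_\pm=\frac12\left(\alpha(\beta t-b)\pm\sqrt{[\alpha(\beta t-b)]^2-4}\right)$ are complex or real depending on the size of $|\alpha(\beta t-b)|$, and derives the dichotomy from the parameter threshold $b\alpha\lessgtr 4$: below it the zero of $\cF_{\lambda_\pm}$ at $T=2b/\beta$ reproduces the symmetric exit, above it (e.g.\ $\beta=1$, $\alpha=2$, $b=3$) the real subarc contributes the one-signed term $\mp\sqrt{[\alpha(\beta t-b)]^2-4}$ and $\cF_{\lambda_\pm}(2b/\beta)\neq 0$; openness of both regimes then comes from Corollary~\ref{cor:sd}. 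You instead evaluate the instantaneous eigenvalues along the particle's actual nonlinear transit arc inside $\cE(\chi)$, with trace $\tau=\alpha z_1\txte^{-\alpha z_2}$ and determinant $\Delta=1-\txte^{-\alpha z_2}$. That is a different balance function: the entire point of the framework is that $\cF$ is computable from on-manifold data alone (``we only need a reference trajectory $\gamma(t)\subset\cM(t)$''), whereas your construction presupposes knowledge of the full particle trajectory inside $\cE(\chi)$ — from which the exit point is already known exactly, and which is closer in spirit to the fully nonlinear comparator $\cF_v$ of Section~\ref{sec:creep} than to $\cF_{\lambda_j}$. So even if every estimate is carried out, you prove a correct/incorrect dichotomy for an off-manifold eigenvalue balance, not the stated proposition.

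Two remarks soften this verdict. First, the mechanism you isolate is exactly the paper's mechanism, transplanted to the nonlinear arc: while the eigenvalues are complex, $\Re\lambda_\pm=\tau/2$ and the half-trace integrates to zero over the symmetric passage (your antiderivative $\tau=\frac{\txtd}{\txtd s}\ln(1-\txte^{-\alpha z_2})$ is a nice way to see this), while real subarcs skew the zero; your small-$b$/large-$b$ regimes mirror the paper's explicit $b\alpha\lessgtr 4$ threshold, which needs no conserved-quantity analysis. Second, your opening observation — that the literal Jacobian of~\eqref{eq:rot1} degenerates on $\cM$, since the $(2,1)$ entry $1-\txte^{-\alpha z_2}$ vanishes at $z_2=0$, leaving real eigenvalues $0$ and $\alpha(\beta t-b)$ — is correct as stated, and it is evidently what pushed you off the manifold; note, however, that it is in tension with the matrix $A$ displayed in the paper's proof (which has a $1$ in that entry, hence determinant one), and the paper's dichotomy lives entirely on that matrix, so the statement you were asked to prove is about balancing \emph{those} eigenvalue curves. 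The secondary obstacles you flag are real but fillable within your own scheme: the conserved quantity $H$ does bound $z_2$ away from $0$ on the arc, hence $\Delta$ from below while $\tau^2=\cO(b^2)$, and $\Re\lambda_{\max}$ is continuous across $\cD=0$ (one small slip: $\Delta>0$ holds on the arc but not ``everywhere in $\cE(\chi)$'', since $\Delta=0$ on $\{z_2=0\}$). None of this repairs the object mismatch, which is the gap to close: the computation must be run along $\gamma\subset\cM$ with the linearization used in Section~\ref{ssec:evalinst}.
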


\begin{proof}
As a trajectory in $\cM$, we must take
\be
\label{eq:trajectrot}
\gamma(t;t_0,z_0)=(\beta(t-t_0)-b,0)^\top.
\ee
Without loss of generality consider $t_0=0$. Linearization yields that
\be
A=A(t;t_0,z_0)=\left(
\begin{array}{cc}
0 & -1 \\
1 & \alpha(\beta t-b)\\
\end{array}
\right).
\ee
The two eigenvalues of $A$ are
$\lambda_\pm=\frac12\left(\alpha(\beta t-b)\pm\sqrt{[\alpha(\beta t-b)]^2-4}\right)$.
Therefore, consider
\bea
\cF_{\lambda_\pm}(T)&=&\int_0^T \Re(\lambda_-(s))~\txtd s,\nonumber\\
&=& \int_0^{\frac{4+b\alpha}{\alpha \beta} \wedge T} \frac{\alpha}{2}(\beta s-b)~\txtd s\pm
\int_{\frac{4+b\alpha}{\alpha \beta} \wedge T}^T \frac12\left(\alpha(\beta t-b)-
\sqrt{[\alpha(\beta t-b)]^2-4}\right)~\txtd s, \label{eq:intF1}
\eea
where $\frac{4+b\alpha}{\alpha \beta} \wedge T =\min\{\frac{4+b\alpha}{\alpha \beta}, T\}$.
The second integral can be evaluated in certain cases but is a lengthy expression.
It is interesting to just consider certain cases. Suppose $b\alpha <4$, then
$\cF_{\lambda_\pm}$ has a zero at $T=\frac{2b}{\beta}$ since $\frac{2b}{\beta}<\frac{4+b\alpha}{\alpha \beta}$.
This yields the correct exit point since $\beta\frac{2b}{\beta}-b=b$. This proves the
first part of the proposition. If $b\alpha>4$ the second integral is relevant in~\eqref{eq:intF1}.
For example, take $\beta=1$, $\alpha=2$ and $b=3$, then we have
\be
\cF_{\lambda_\pm}(T)= -\frac52\pm \int_{5}^T t-3-\sqrt{[(t-3)]^2-1}~\txtd s
\ee
The last integral can be evaluated and one checks that $\cF_{\lambda_\pm}(6)\neq 0$.
However, $\gamma(6;0,(-3,0))=(6-3,0)=(3,0)$ is the correct exit point. A direct
application of Corollary~\ref{cor:sd} yields an open set of parameters where $\cF_{\lambda_j}$
yields an incorrect exit point.
\end{proof}\medskip

Since monitoring the real parts of instantaneous eigenvalues and requiring an integrated balance
function $\cF_{\lambda_j}$ does not yield the correct result in relevant cases, we discard this
option from now on. Another option considered in Section~\ref{ssec:fastslow} is to scale the problem
differently.\medskip

\begin{prop}
There exists a scaling of~\eqref{eq:rot1} converting it to the standard fast-slow form~\eqref{eq:fs2}.
Furthermore, balancing the function $\cF_\rho$ yields the correct exit point.
\end{prop}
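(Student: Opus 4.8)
The plan is to exhibit an explicit scaling that turns~\eqref{eq:rot1} into the standard fast-slow form~\eqref{eq:fs2} and then to evaluate the balance function $\cF_\rho$ from~\eqref{eq:defFrho} directly. The natural scale separation is produced by treating the regularization parameter $\alpha$ as large: set $\varepsilon:=1/\alpha$ and rescale the normal coordinate by $z_2=\varepsilon w$. Since the vector field has no explicit time dependence, no time rescaling is needed, and substituting into~\eqref{eq:rot1} using $\alpha z_2=w$ gives
\be
\begin{array}{rcl}
\varepsilon w' &=& z_1(1-\txte^{-w}),\\
z_1' &=& \beta-\varepsilon w,
\end{array}
\ee
which is precisely of the form~\eqref{eq:fs2} with fast variable $x=w$ and slow variable $y=z_1$. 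The critical manifold is $\cC_0=\{z_1(1-\txte^{-w})=0\}$, which for $z_1\neq0$ reduces to $\{w=0\}$ and hence coincides with $\cM=\{z_2=0\}$, as required.

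Next I would compute the fast subsystem eigenvalue. Because $m=1$ there is a single real eigenvalue $\rho(s)=\left.\frac{\partial}{\partial w}\big(z_1(1-\txte^{-w})\big)\right|_{w=0}=z_1$, so $\Re(\rho)=\rho$ and none of the square-root branch issues encountered for $\cF_{\lambda_j}$ arise. The reduced slow flow~\eqref{eq:ss} on $\cC_0$ is $z_1'=\beta$, so along the trajectory entering at $z_0=(-b,\chi)$ we have $z_1(s)=\beta s-b$ (taking $t_0=0$). Plugging into~\eqref{eq:defFrho},
\be
\cF_\rho(T)=\int_0^T z_1(s)~\txtd s=\int_0^T(\beta s-b)~\txtd s=\frac{\beta}{2}T^2-bT=T\left(\frac{\beta}{2}T-b\right).
\ee
The nontrivial zero is $T=2b/\beta$, and $\gamma(2b/\beta;0,z_0)=(\beta\cdot 2b/\beta-b,0)=(b,0)$, whose $z_1$-coordinate is exactly the exit point $(b,\chi)$ predicted by symmetry. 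Hence balancing $\cF_\rho$ returns the correct exit.

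The hard part is not the algebra but the passage through $z_1=0$ at $s=b/\beta$, where $\rho=0$ and $\cC_0$ loses normal hyperbolicity, switching from attracting ($z_1<0$) to repelling ($z_1>0$). This is exactly the stability-exchange configuration for which the classical entry-exit (way-in/way-out) map was designed~\cite{Neishtadt1,Schecter}; the balance condition $\int_0^T\rho~\txtd s=0$ expresses that the contraction accumulated before the transition must cancel the expansion accumulated afterwards, and its root is the delayed exit time. I would import the rigorous justification that this formal map stays accurate through the non-hyperbolic point from the cited fast-slow theory (see also~\cite{KuehnBook}) rather than reprove it. Two independent checks keep the computation honest: the location of the root is insensitive to the constant prefactor in $\rho$, so it does not matter whether $\varepsilon$ is identified with $1/\alpha$ or with any other book-keeping of the scale separation; and the symmetric answer $-b\mapsto b$ agrees with the reversibility symmetry $(z_1,z_2,t)\mapsto(-z_1,z_2,-t)$ of~\eqref{eq:rot1}.
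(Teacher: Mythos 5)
Your proof is correct, and the overall strategy matches the paper's (exhibit a scaling into the form~\eqref{eq:fs2}, compute the single fast eigenvalue along the slow trajectory, integrate, and locate the zero $T=2b/\beta$), but the scaling itself --- the one genuine choice in this proof --- is different. The paper introduces an \emph{artificial} small parameter via $(z_1,z_2,t)=(y/\sqrt\varepsilon,x,s/\sqrt\varepsilon)$, rescaling the tangential coordinate and time, which yields $\varepsilon\dot x=y(1-\txte^{-\alpha x})$, $\dot y=-(x-\beta)$, fast eigenvalue $\rho=\alpha(\beta s-b)$, and $\cF_\rho(T)=\alpha T\bigl[\tfrac{\beta}{2}T-b\bigr]$. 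You instead identify $\varepsilon=1/\alpha$ and rescale only the normal coordinate $z_2=\varepsilon w$, with no time rescaling, obtaining $\rho=z_1=\beta s-b$; since your $\rho$ differs from the paper's by the positive constant factor $\alpha$, the balance zero is identical, as you correctly flag. What each route buys: your version leaves $z_1$ and $t$ untouched, so the exit time and exit point are read off directly in the original variables, whereas the paper's version tacitly requires converting back through the $\sqrt\varepsilon$-rescaling of $z_1$ and $t$ (a step it glosses over); conversely, the paper's artificial $\varepsilon$ provides genuine scale separation $0<\varepsilon\ll1$ for \emph{every} fixed $\alpha,\beta>0$, while your $\varepsilon=1/\alpha$ is small only when $\alpha\gg 1$ --- for instance at the figure's value $\alpha=2$ your system has the algebraic form of~\eqref{eq:fs2} but with $\varepsilon=1/2$, so strictly you should either restrict to large $\alpha$ or invoke your own book-keeping remark (equivalently, compose your change of variables with an artificial rescaling as in the paper). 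This is a caveat rather than a gap, because the zero of $\cF_\rho$ is invariant under positive rescalings of $\rho$ and the computation takes place on the critical manifold. Finally, your explicit discussion of the loss of normal hyperbolicity at $z_1=0$, where the two branches of $\cC_0=\{z_1=0\}\cup\{w=0\}$ cross, together with the appeal to the rigorous entry--exit theory~\cite{Neishtadt1,Schecter}, is more careful than the paper's proof, which computes formally and defers the citation of~\cite{Schecter} to the general discussion in Section~\ref{ssec:fastslow}.
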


\begin{proof}
Consider the scaling $(z_1,z_2,t)=(y/\sqrt\varepsilon,x,s/\sqrt\varepsilon)$
and observe that~\eqref{eq:rot1} then becomes a fast-slow system
\be
\label{eq:rot2}
\begin{array}{rcl}
\varepsilon \dot{x}&=& y(1-\txte^{-\alpha x}),\\
\dot{y}&=&-(x-\beta).\\
\end{array}
\ee
The critical manifold is $\cC_0=\{y=0\}\cup \{x=0\}$. Note that $\cC_0$ is
not a smooth manifold but the subset $\{x=0\}=\cM$ is an invariant manifold
for~\eqref{eq:rot2} for any $\varepsilon>0$ and we focus just on $\cM$ here.
The slow subsystem is given by $\dot{y}=\beta$ so $\gamma(s)=(0,\beta(s-s_0)-b)$.
In the notation of Section~\ref{ssec:fastslow} we have the linearization of
the fast vector field yields just one eigenvalue so that
\be
\rho=\txtD_xf(\gamma,0)=\alpha[\beta(s-s_0)-b].
\ee
Therefore, the balance function can be calculated, say for $s_0=0$,
\be
\label{eq:rotcalcfs1}
\cF_\rho(T)=\alpha \int_0^T \beta s-b~\txtd s =\alpha\left[\frac{\beta}{2}T^2-bT\right]
=\alpha T\left[\frac{\beta}{2} T-b\right].
\ee
Now the balance condition $\cF_\rho(T)=0$ provides the correct answer that
the exit point is given, via the zero $T=2b/\beta$, as $(x,y)=(0,b)$.
\end{proof}\medskip

Furthermore, we easily check that the zero of the balance function is indeed
isolated since $\cF_\rho'(2b)=\alpha(2b-b)=\alpha b>0$ since $\alpha,b>0$ by assumption.
However, the fast-slow calculation did require some insight how to scale the problem.
As the next option, we consider FTLE balance functions. In fact, the abstract framework
is relatively straightforward. Considering the non-autonomous linear problem
$Z'=A(T)Z$, we know that a fundamental solution can be written as
\be
\Phi(t;t_0,z_0)=\exp\left[\int_{t_0}^t A(r)~\txtd r\right].
\ee
Working out the integral is easy but the algebraic form of matrix exponential is
extremely lengthy in the general case. However, observe that
\be
\int_{t_0}^t A(r)~\txtd r=\left(
\begin{array}{cc}
 0 & -t \\
 t & \frac{1}{2} t (\beta t-2 b) \alpha  \\
\end{array}
\right)
\ee
So if $t=2b/\beta$ then we easily find that the singular values of $\Phi(2b/a,0,(-b,0))$
are both zero.
Therefore, it follows indeed that $\cF_{l_1}(2b/\beta)=0=\cF_{l_2}(2b/\beta)$ yielding the
correct balance time for this case.\medskip

\begin{prop}
Balancing $\cF_{l_j}$ yields the correct entry-exit relationship for~\eqref{eq:rot1}.
\end{prop}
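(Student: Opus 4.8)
The plan is to reduce the claim to a single orthogonality statement about the variational flow. The reversibility $(z_1,z_2,t)\mapsto(-z_1,z_2,-t)$ already predicts that a particle entering at $z_0=(-b,0)$ must exit at $(b,0)$, which the reference trajectory $\gamma(t;0,z_0)=(\beta t-b,0)^\top$ reaches precisely at $T=2b/\beta$. So the only thing to check is that the FTLE balance function has its nontrivial zero there. Since $\cF_{l_j}(t)=\tfrac1t\ln\delta_j(t)$, a zero at $T$ is equivalent to the singular value $\delta_j(T)=1$; hence it suffices to prove that $\Phi(2b/\beta;0,z_0)$ has all singular values equal to $1$, i.e.\ that it is orthogonal.

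First I would use the fundamental-solution representation $\Phi(t;0,z_0)=\exp[\int_0^t A(r)\,\txtd r]$ recorded just above, together with the explicit antiderivative
\[
\int_0^t A(r)\,\txtd r=\begin{pmatrix}0 & -t\\ t & \tfrac12\alpha t(\beta t-2b)\end{pmatrix}.
\]
The key observation is that the $(2,2)$-entry $\tfrac12\alpha t(\beta t-2b)$ vanishes exactly at $t=2b/\beta$, so the integral collapses to the skew-symmetric matrix $\tfrac{2b}{\beta}\begin{pmatrix}0&-1\\ 1&0\end{pmatrix}$, whose exponential is a planar rotation and therefore orthogonal. Orthogonality gives $\delta_1(2b/\beta)=\delta_2(2b/\beta)=1$, whence $\cF_{l_1}(2b/\beta)=\cF_{l_2}(2b/\beta)=\tfrac{\beta}{2b}\ln 1=0$. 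As this returns the symmetry-predicted exit $\gamma(2b/\beta)=(b,0)$, the argument would be complete.

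The hard part will be justifying the closed form $\Phi=\exp[\int_0^t A]$ itself. It holds only when $A(t)$ commutes with $\int_0^t A$, whereas a quick check gives $[A(t),\int_0^t A]=\tfrac12\alpha\beta t^2\begin{pmatrix}0&1\\ 1&0\end{pmatrix}$, which is nonzero for $t\neq0$; so the exponential is not literally the time-ordered fundamental solution and has to be read as the governing linearized simplification in this example. A representation-independent route would instead combine two structural facts: Liouville's formula $\det\Phi(2b/\beta)=\exp\int_0^{2b/\beta}\mathrm{tr}\,A\,\txtd r=1$, which already forces the two singular values into a reciprocal pair $\{\delta,1/\delta\}$, and the reversibility identity $A(2b/\beta-t)=-S A(t)S$ with $S=\mathrm{diag}(-1,1)$, which yields $\Phi(2b/\beta)\,S\,\Phi(2b/\beta)=S$. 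The remaining—and genuinely delicate—step is to upgrade this to $\delta=1$, i.e.\ full orthogonality of $\Phi(2b/\beta)$; this is exactly the content that the skew-symmetry of $\int_0^{2b/\beta}A$ supplies in the computation above, and making it rigorous without the exponential shortcut is where the real effort would lie.
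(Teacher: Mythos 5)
Your first half is not merely the same approach as the paper --- it \emph{is} the paper's proof: the paper also writes $\Phi(t;t_0,z_0)=\exp[\int_{t_0}^t A(r)\,\txtd r]$, displays the same antiderivative, notes that the $(2,2)$-entry $\tfrac{\alpha}{2}t(\beta t-2b)$ vanishes at $t=2b/\beta$ so that the integral becomes skew-symmetric and its exponential a rotation, and concludes $\cF_{l_1}(2b/\beta)=0=\cF_{l_2}(2b/\beta)$. (The paper's sentence that the ``singular values\ldots are both zero'' is a slip --- zero is meant for the exponents $l_j$, one for the singular values $\delta_j$ --- and your reading, $\delta_j(T)=1$ via orthogonality of $\Phi(T)$, is clearly the intended one.) Where you depart from the paper is in interrogating the exponential formula itself, and you are right to: your commutator $[A(t),\int_0^t A]=\tfrac{\alpha\beta t^2}{2}\bigl(\begin{smallmatrix}0&1\\1&0\end{smallmatrix}\bigr)$ is correct, the family is non-commuting, and hence $\exp[\int_0^t A]$ is \emph{not} the time-ordered fundamental solution. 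So the gap you flag is not a defect of your write-up relative to the paper; it is a genuine gap in the paper's own proof, which silently assumes a commuting family.

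Your fallback facts are both correct: $\int_0^{2b/\beta}\mathrm{tr}\,A\,\txtd r=\alpha(\tfrac{\beta}{2}T^2-bT)=0$ gives $\det\Phi(T)=1$ by Liouville, and the conjugacy $A(T-t)=-SA(t)S$ gives $\Phi(T)S\Phi(T)=S$ (set $Q(t):=S\Phi(T-t)S$; then $Q'=AQ$, so $Q(t)=\Phi(t)S\Phi(T)S$, and $Q(T)=I$ yields the identity). But, as you suspect, these cannot close the argument: they say only that $S\Phi(T)$ is an involution of determinant $-1$, and for every $k$ the matrix
\benn
\Phi_k=\begin{pmatrix}-1 & -k\\ 0 & -1\end{pmatrix}
\eenn
satisfies $\det\Phi_k=1$ and $\Phi_k S\Phi_k=S$ while having singular values $\{\delta,1/\delta\}$ with $\delta>1$ for $k\neq0$. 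Worse, the missing step appears to fail exactly rather than merely resist proof: writing $A(t)=J+\alpha g(t)\,e_2e_2^\top$ with $J$ the standard rotation generator and $g(t)=\beta t-b$, first-order perturbation theory gives $\Phi(T)=\txte^{TJ}\bigl(I+\alpha B+\cO(\alpha^2)\bigr)$ with $B=\int_0^T g(s)\,\txte^{-sJ}e_2e_2^\top \txte^{sJ}\,\txtd s$ symmetric and trace-free (since $\int_0^T g=0$), and a short computation shows $B\neq0$ unless $\tan(2b/\beta)=2b/\beta$. Generically, then, $\delta_1(T)=1+\alpha\mu+\cO(\alpha^2)$ with $\mu>0$, the true monodromy is not orthogonal, and $\cF_{l_j}$ vanishes only \emph{near} $T=2b/\beta$, not at it. In short: your proposal faithfully reproduces the paper's argument, correctly diagnoses why it is not rigorous, and your honest assessment that the final orthogonality step is the crux is, if anything, understated --- the proposition as stated is exact only for the commuting surrogate $\exp[\int_0^t A]$ that both you and the paper actually analyze.
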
\medskip

The formulas for FTLE can be extremely cumbersome. Calculating the FTLEs analytically
for all $t$ corresponds actually to a full solution of the non-autonomous linear
system along the invariant manifold. Hence, it seems worthwhile to search for simpler
balance functions that take into account the existence of $\cM$ and its geometry.
A natural option seems to be the NILE as discussed in Section~\ref{ssec:NILE}.\medskip

\begin{prop}
Balancing $\cF_{\sigma}$ yields the correct entry-exit relationship for~\eqref{eq:rot1}.
\end{prop}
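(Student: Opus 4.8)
The plan is to reduce the computation of $\cF_\sigma$ to a single scalar integral by exploiting that $\cM=\{z_2=0\}$ has codimension one, so its normal bundle is one-dimensional. First I would fix, as in~\eqref{eq:trajectrot}, the reference trajectory $\gamma(t;0,z_0)=(\beta t-b,0)^\top$ contained in $\cM$, and identify the normal space: at any $p=(z_1,0)\in\cM$ the tangent space $\txtT_p\cM$ is spanned by $(1,0)^\top$, hence $\txtN_p\cM$ is spanned by the unit vector $n=(0,1)^\top$. Since the only unit normals are $\pm n$ and the quadratic form is even in $n$, the maximum in the characterization~\eqref{eq:compn} of the NILE collapses to a single diagonal entry, namely $\sigma(p;t)=n^\top[\txtD_z h(p,t)]\,n=[\txtD_z h(p,t)]_{22}$. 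This step is what makes the whole statement tractable, and I expect it to be the only genuinely conceptual point; everything afterwards is mechanical.

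Next I would compute the Jacobian of the right-hand side of~\eqref{eq:rot1}. A direct differentiation gives the $(2,2)$ entry $\partial_{z_2}\!\big(z_1(1-\txte^{-\alpha z_2})\big)=\alpha z_1\txte^{-\alpha z_2}$, which on $\cM$ (where $\txte^{-\alpha z_2}=1$) equals $\alpha z_1$. Evaluating along $\gamma$, where $z_1=\beta t-b$, yields $\sigma(\gamma(t;0,z_0),t)=\alpha(\beta t-b)$. The key observation to record here is that this is \emph{identical} to the fast-subsystem eigenvalue $\rho=\alpha[\beta(s-s_0)-b]$ found in Section~\ref{ssec:fastslow}. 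Consequently the balance function~\eqref{eq:Fsigma} coincides with the fast-slow balance function already evaluated in~\eqref{eq:rotcalcfs1}, so that $\cF_\sigma(T)=\alpha\int_0^T(\beta s-b)\,\txtd s=\alpha T\big[\tfrac{\beta}{2}T-b\big]$.

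From the closed form, the balance condition $\cF_\sigma(T)=0$ has the nontrivial root $T=2b/\beta$, which corresponds via $\gamma$ to the point $(\beta\cdot 2b/\beta-b,0)=(b,0)$, i.e.\ exactly the symmetry-predicted exit point $z_T=(b,\chi)$ in the $\chi\to 0$ linearized limit. To confirm this root is the one selected by the framework and is robust, I would verify non-degeneracy as in Corollary~\ref{cor:sd}: $\cF_\sigma'(T)=\alpha(\beta T-b)$, so $\cF_\sigma'(2b/\beta)=\alpha b>0$ since $\alpha,b>0$. As an independent cross-check I would also run the graph formula~\eqref{eq:NILE_mat}, taking the normal coordinate $x=z_2$ and tangential coordinate $y=z_1$ with $m_\cM\equiv 0$, so that $\Gamma(y,t)=\partial_x f(0,y)=\alpha y$ is the scalar $\alpha(\beta t-b)$ along $\gamma$, again reproducing $\sigma$. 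The mild potential obstacle is purely bookkeeping, namely keeping the roles of the fast/normal and slow/tangential variables consistent between~\eqref{eq:compn} and~\eqref{eq:NILE_mat}; once this is checked, the correctness of the entry-exit relationship follows immediately from the already-established analysis of $\cF_\rho$.
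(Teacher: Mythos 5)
Your proposal is correct and takes essentially the same route as the paper's own proof: both evaluate the NILE via formula~\eqref{eq:compn} with the unit normal $n=(0,1)^\top$ along $\gamma(t)=(\beta t-b,0)^\top$, obtain $\sigma(\gamma,t)=\alpha(\beta t-b)$, and conclude via the same integral as in~\eqref{eq:rotcalcfs1} that the root $T=2b/\beta$ gives the correct exit point $(b,0)$. Your explicit non-degeneracy check $\cF_\sigma'(2b/\beta)=\alpha b>0$ and the cross-check via the graph formula~\eqref{eq:NILE_mat} are harmless additions that the paper records elsewhere rather than in this proof.
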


\begin{proof}
Consider~\eqref{eq:trajectrot} for $t_0=0$ and the invariant manifold $\cM=\{z_2=0\}$
for~\eqref{eq:rot1}. A unit normal vector field is given by $n(p,t)=(0,1)^\top$
and formula~\eqref{eq:compn} yields
\benn
\sigma(\gamma,t)=(0,1)
\left(
\begin{array}{cc}
0 & -1 \\
1 & \alpha(\beta t-b)\\
\end{array}
\right)
\left(
\begin{array}{c}
0  \\
1 \\
\end{array}
\right)=\alpha(\beta t-b).
\eenn
Hence, the balance function $\cF_\sigma$ also yields the correct result for the
exit point by the same calculation as in~\eqref{eq:rotcalcfs1}.
\end{proof}\medskip

No preliminary scaling by $\varepsilon$ was necessary for the NILE case nor was it
necessary to solve the full non-autonomous system. Hence, the concept seems to
be well-suited to take into account the geometry of the invariant manifold. Furthermore, we
may calculate eigenvalues directly from the linearized problem without
the need to scale by a small parameter. However, NILE is a
single rate so one should keep in mind that it would be useful to generalize
the case from a unit normal vector field to $\cM$ to other transverse frames
when projection operators depend upon time and position;
cf.~\cite[p.614-615]{HallerSapsis1}. However, generically the entry (resp.~departure)
speed is governed by the largest normal rates in the attracting (resp.~repelling) regime
so NILE already captures what we are interested in for most applications.

\subsection{The Kuhlmann--Muldoon model}
\label{ssec:MuldoonKuhlmann}

An axisymmetric liquid bridge of length $d$, stabilized by surface tension, can be 
established between two parallel and coaxial wetted disks of cylindrical rods of 
radius $R$. If the disks are heated differentially and the aspect ratio 
$\Gamma=d/R$ is of order $\cO(1)$, temperature gradients along the capillary interface 
induce a toroidal vortex via the thermocapillary effect~\cite{ScrivenSternling}, which can 
become modulated azimuthally for large imposed temperature differences. Muldoon and 
Kuhlmann~\cite{MuldoonKuhlmann} proposed a closed-form approximation to the three-dimensional 
time-dependent Navier--Stokes flow in a cylindrical liquid bridge aiming at modeling the 
rapid de-mixing of particles into curious accumulation structures found experimentally 
(see e.g.~\cite{SchwabeMizevUdhayasankarTanaka}); see also Section~\ref{sec:motivation} for 
further background. Using a separation ansatz and free-slip 
boundary conditions on the supporting disks, the axisymmetric part of the flow field was 
represented by harmonic functions. The algebraic radial dependence of the axisymmetric flow 
is then dictated by the incompressibility constraint. The remaining integration constants 
were used to fit the strength and the radial shape of the vortex to the numerically obtained 
Navier--Stokes solution. The axisymmetric steady part of the Kuhlmann--Muldoon model flow 
is symmetric with respect to a midplane $\{z_2=0\}$. Here we  extend this flow by an antisymmetric 
term which may take care of the inertia-induced asymmetry of the real flow. This also
aims to test balance functions in a setup without symmetry. Let the fluid 
be confined to the cylindrical volume $(z_1,\phi,z_2)\in[0,1/\Gamma]\times [0,2\pi]
\times [-0.5,0.5]$ and discard the second component. Then we define the planar 
model flow for an aspect ratio $\Gamma=2/3$ by
\be
\label{eq:MK_ODE}
\begin{array}{lcl}
   z_1' &=& \pi z_1^\eta(1-\frac23 z_1)[\sin(\pi z_2)-2\alpha_{\txts}\cos(2\pi z_2)],\\
   z_2' &=& [(\eta+1)z_1^{\eta-1}-
\frac23(\eta+2)z_1^\eta][\cos (\pi z_2)+\alpha_{\txts}\sin(2\pi z_2)],
\end{array}
\ee
where $z_1\in(0,\frac32]$ and $z_2\in(-\frac12,\frac12)$. The parameter $\alpha_{\txts}$ controls the 
asymmetry of the flow, and the exponent $\eta\in[4,5]$ determines the radial shape of the vortex. 
Figure~\ref{fig:model_flow} shows examples of the flow field.

\begin{figure}[htbp]
\small
\psfrag{r}{$z_1$}
\psfrag{z}{\rotatebox{-90}{$z_2$}\ }
\psfrag{0}{$0$}
\psfrag{0.5}[br][br]{$0.5$}
\psfrag{1}{$1$}
\psfrag{1.5}{$1.5$}
\psfrag{-0.5}[br][br]{$-0.5$}
    \centering
\subfigure[]{\includegraphics[clip,width=0.48\textwidth]{./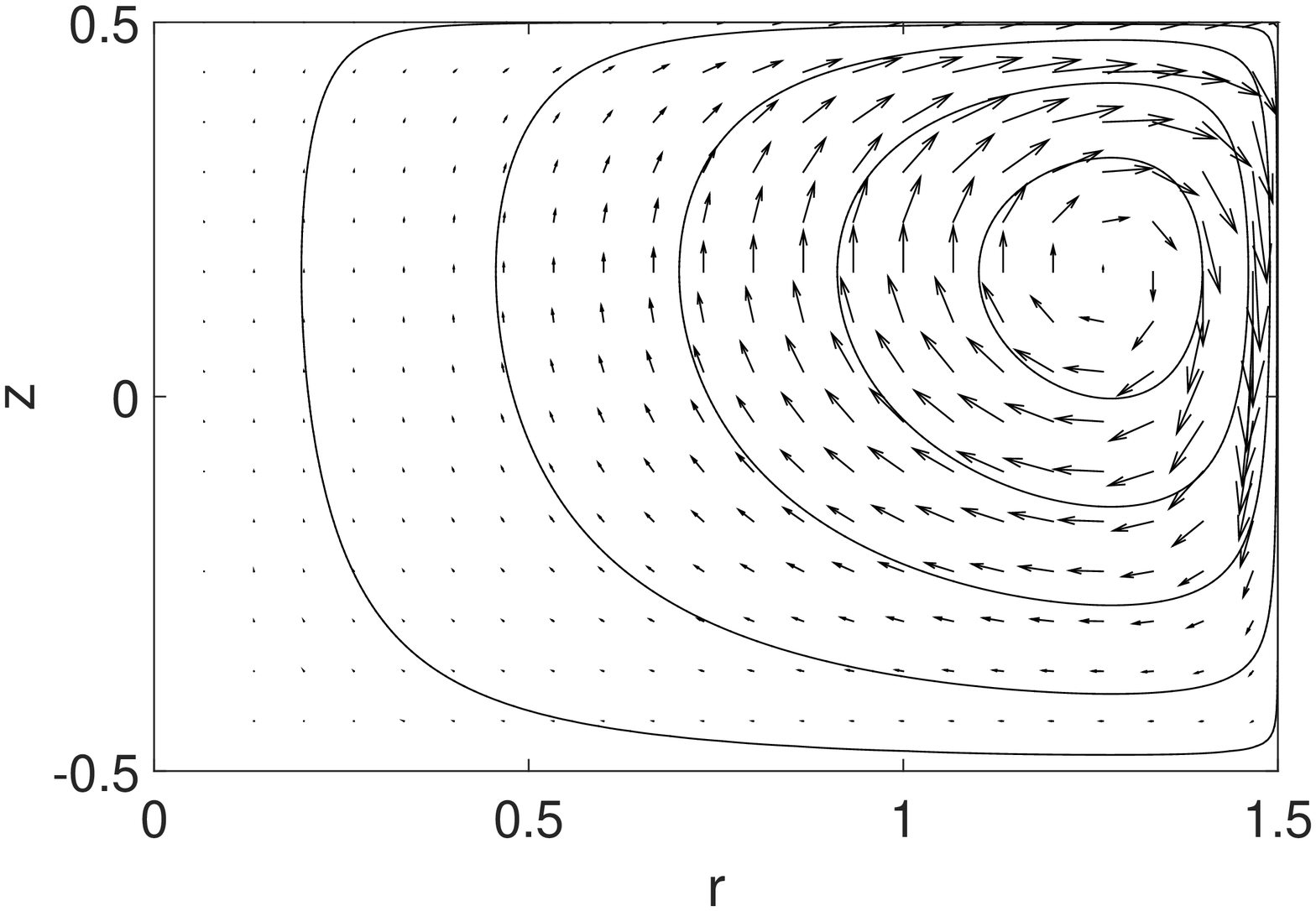}}
        \hfill
\subfigure[]{\includegraphics[clip,width=0.48\textwidth]{./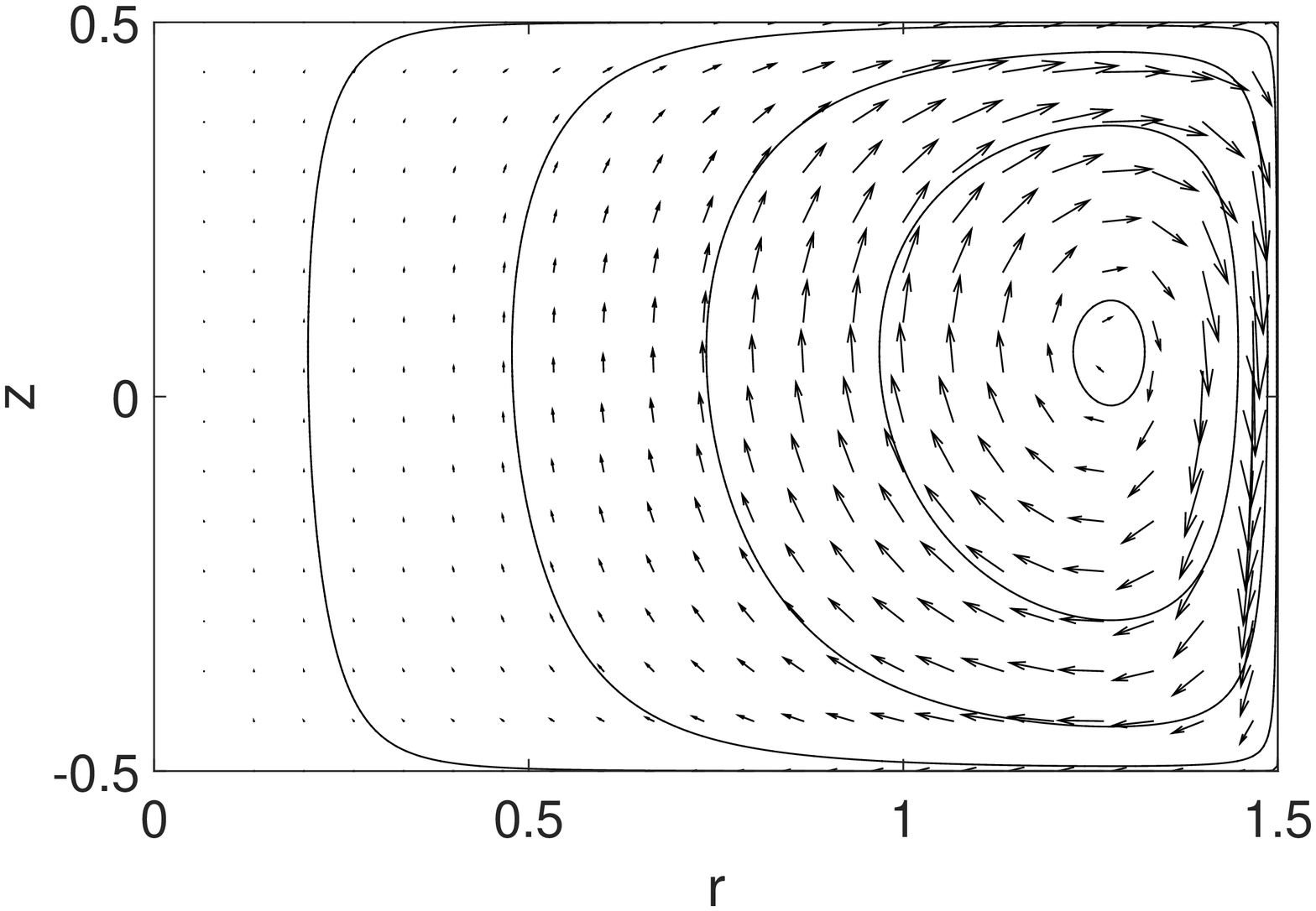}}
        \caption{\label{fig:model_flow} Streamlines and vector field of the model flow \eqref{eq:MK_ODE} for $\eta=4.74$ and $\alpha_s = 0.5$ (a) and $\alpha_s=0.1$ (b).}
\end{figure}

The relevant invariant manifold we are interested in is $\cM=\{z_1=\frac32,z_2\in(-\frac12,\frac12)\}$ 
(capillary surface). Note that the flow on $\cM$ can be calculated from
\be
\label{eq:ODE_MK_onM}
z_2'=-\left(\frac32\right)^{\eta-1} [\cos (\pi z_2)+\alpha_{\txts}\sin(2\pi z_2)].
\ee

Unfortunately, there does not seem to be a closed form solution to the
ODE~\eqref{eq:ODE_MK_onM}. As before, let $\gamma$ denote a trajectory
contained inside $\cM$. The linearization $A(t)=\txtD h(\gamma(t))$ with
$t_0=0$ starting from initial data $(z_1,z_2)=(\frac23-\xi,b)$ along $\gamma$
can be calculated
\benn
\left(
\begin{array}{cc}
 \left(\frac{3}{2}\right)^{\eta-1} (2 \alpha_\txts \cos (2 \pi  \gamma_2)-\sin (\pi  \gamma_2)) & 0 \\
 -\left(\frac{2}{3}\right)^{2-\eta} (2 \eta+1) (\cos (\pi  \gamma_2)+A \sin (2 \pi  \gamma_2)) &
 -\left(\frac{2}{3}\right)^{1-\eta} \pi  (2 \alpha_\txts \cos (2 \pi  \gamma_2)-\sin (\pi  \gamma_2)) \\
\end{array}
\right)
\eenn
The full analytical computation of FTLEs is not possible, hence we start by
considering NILE. Since a unit normal vector field to $\cM$ is just given by
$n(p,t)=(0,1)^\top$, we can easily calculate
\be
(0,1)~A~\left(\begin{array}{c} 0\\ 1\end{array}\right)=
-\left(\frac{2}{3}\right)^{1-\eta} \pi  (2 \alpha_\txts \cos (2 \pi  \gamma_2)-\sin (\pi  \gamma_2))
\ee
Now one can actually numerically integrate~\eqref{eq:ODE_MK_onM} and then
insert the solution $\gamma$ into the integral
\be
\label{eq:intMK}
\cF_\sigma(t)=-\left(\frac{2}{3}\right)^{1-\eta} \pi   \int_0^t
2 \alpha_\txts \cos (2 \pi  \gamma_2)-\sin (\pi  \gamma_2)~\txtd s.
\ee

\begin{figure}[htbp]
\psfrag{T}{$T$}
\psfrag{alpha}{$\alpha_\txts$}
\psfrag{eta}{$\eta$}
\psfrag{a}{(a)}
\psfrag{b}{(b)}
	\centering
		\includegraphics[width=0.75\textwidth]{./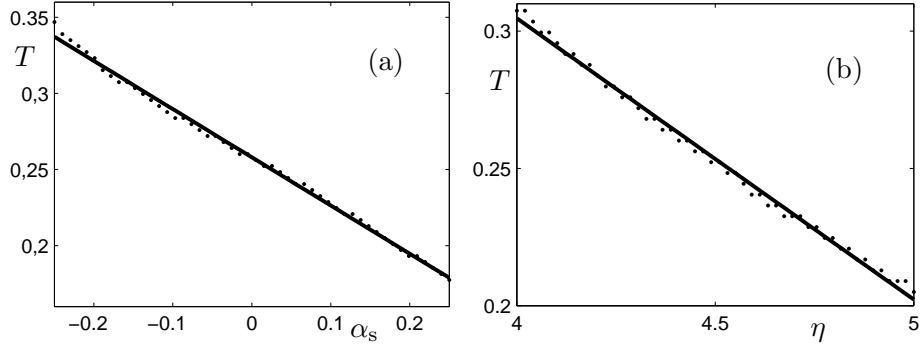}	
		\caption{\label{fig:03}Computation for the Kuhlmann--Muldoon model relating the initial
		time $t_0=0$ to a final exit time $T$ using the integral formula~\eqref{eq:intMK}
		finding $T$ by requiring $\cF_\sigma(T)=0$.
		(a) The parameter $\eta=4.74$ and the initial condition
		$(\frac32,\frac{2}{5})$ are fixed and the asymmetry parameter is varied. The computed
		exit times are marked as dots; a linear fit to the computed points is shown. (b) Identical
		setup as in (a) except that the parameter $\alpha_\txts=\frac{1}{10}$ is fixed and the parameter
		$\eta$ is varied to compute the exit time.}
\end{figure}

We already see that if we are just interested in zeros of $\cF_\sigma$
we may discard the prefactor in~\eqref{eq:intMK} as $\eta\in[4,5]$. Figure~\ref{fig:03}
shows a computation based upon the integral formula~\eqref{eq:intMK} to study the
influence of the two parameters $\alpha_\txts,\eta$ on the exit time $T$. We observe a
very clear linear relationship between the parameters and the exit time $T$. The initial
condition has been fixed to $(\frac32,\frac{1}{10})$. If $\alpha_\txts,\eta$ are decreased,
we observe that this induces a quicker escape in terms of escape times. Figure~\ref{fig:04}
converts the escape times into escape points by integrating the one-dimensional
ODE~\eqref{eq:ODE_MK_onM}. Interestingly the dependence on the two parameters now shows
a completely different behaviour. For the asymmetry parameter $\alpha_\txts$, a nonlinear
behaviour is observed while for $\eta$, we observe a similar exit point regardless of the
parameter, i.e., the changing escape time is compensated by a different speed on $\cM$.

\begin{figure}[htbp]
\psfrag{yT}{\scriptsize{$z_2(T)$}}
\psfrag{alpha}{$\alpha_\txts$}
\psfrag{eta}{$\eta$}
\psfrag{a}{(a)}
\psfrag{b}{(b)}
	\centering
		\includegraphics[width=0.95\textwidth]{./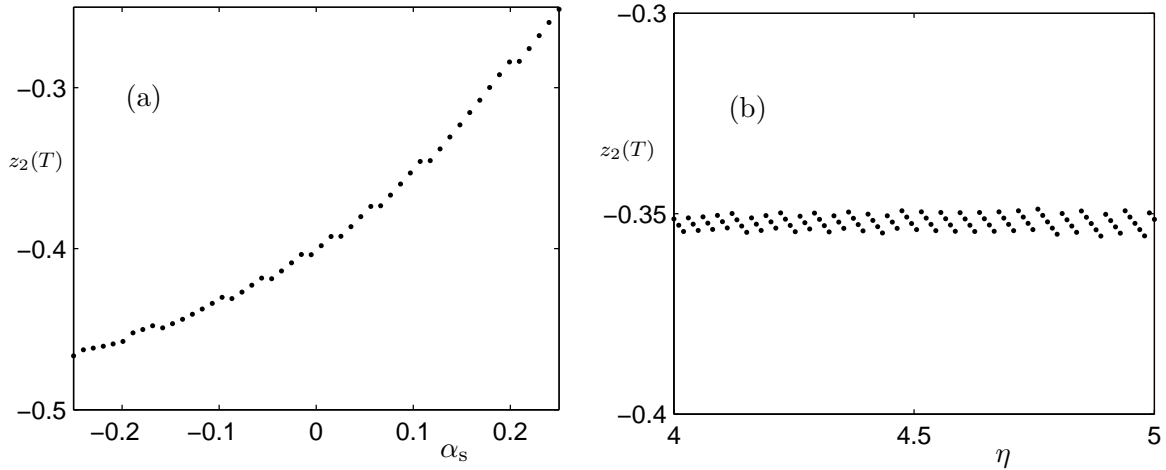}	
		\caption{\label{fig:04}Computation for the Kuhlmann--Muldoon model relating the initial
		time $t_0=0$ to a final exit time $T$ using the integral formula~\eqref{eq:intMK}. In
		contrast to Figure~\ref{fig:03} we show the exit point $z_2(T)$ on the vertical axis
		obtained from numerically integrating~\eqref{eq:ODE_MK_onM} up to time $T$.
		(a) The parameter $\eta=4.74$ and the initial condition
		$(\frac32,\frac{2}{5})$ are fixed and the asymmetry parameter is varied. The computed
		exit points are marked as dots. (b) Identical setup as in (a) except that the 
		parameter $\alpha_\txts=\frac{1}{10}$ is fixed and the parameter $\eta$ is varied to 
		compute the exit point.}
\end{figure}

The analysis shows that it is very convenient to use balance functions numerically and
to determine the influence of different parameters on particle motion near $\cM$.\medskip

\textbf{Remark:} Another way to arrive at the entry-exit map is to scale the original
ODE~\eqref{eq:MK_ODE} by first moving $\cM$ to the manifold $\{z_1=0,z_2\in(-0.5,0.5)\}$
and then scaling $z_1$ by a suitable power of a small parameter $\varepsilon$ to make
the $z_1$-variable a fast $x$-variable. However, the problem already shows that scaling
becomes more and more involved once the manifold $\cM$ is nontrivial.

\section{Numerical Simulations for a Particle near a Free-Surface}
\label{sec:creep}

Particle-laden flows are a class of multiphase flows in which there is a continuously
connected fluid-phase and a dispersed particle phase which do not mix. When the particles
scales are very small compared to the fluid scales, neglecting the feedback effect of
the particles on the fluid flow can be a good approximation in numerically calculating
each particle trajectory. In this case we talk about one-way coupling simulations. However,
if the particle passes very close to a wall/free-surface, the particle and the lubrication-gap 
scales must be solved and the feedback effect of the particle on the fluid phase is
essential~\cite{RomanoKuhlmann,KosinskiKosinskaHoffmann}, i.e., the non-autonomous dynamics
really differs from the autonomous case. Simulating both phases can be 
very expensive computationally. Therefore, understanding the physical mechanisms 
which play the main role in the particle--boundary interaction contributes to an accurate 
modelling of this phenomenon. Taking into account lubrication effects can lead to a significant 
improvement in predicting the particle trajectories. Hence, this setup is an excellent test
case for the framework of balance functions presented above.

\begin{figure}
\small
\psfrag{tau0}[lc][lc]{$\tau_0$}%
\psfrag{y1}[lc][lc]{$z_2$}%
\psfrag{x1}[lc][lc]{$z_1$}%
\psfrag{L1}[lc][lc]{$L$}%
\psfrag{a0}[lc][lc]{$2a$}%
\begin{center}
\includegraphics[clip,height=0.35\textwidth]{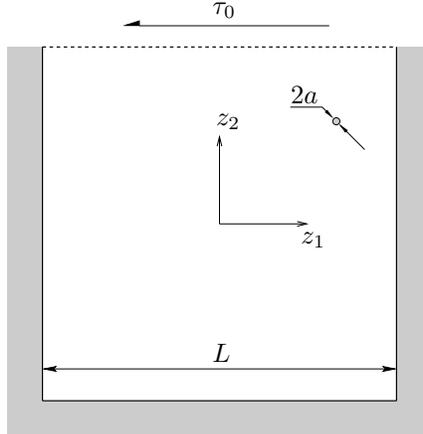}
\caption{Sketch of the two-dimensional shear--stress-driven cavity seeded with a particle
of radius $a = 0.01\cdot L$.}
\label{fig:06}
\end{center}
\end{figure}

Consider a two-dimensional square cavity filled with an incompressible Newtonian 
liquid of density $\rho_\text{f}$ and kinematic viscosity $\nu$ as shown in 
Figure~\ref{fig:06}; we denote the horizontal and vertical coordinates
by $z_1$ and $z_2$ respectively similar to our notation above and set $z=(z_1,z_2)^\top$. 
The cavity of linear length $L$ is open
from above and the fluid flow is driven by a constant shear stress, $\tau_0$, in
$z_1$-direction. In the limit of asymptotically large surface tension the interface 
is flat. The flow in the shear--stress-driven cavity can be modelled 
using the Navier--Stokes equations and employing a viscous scaling
\begin{align}
   {\hat{u}} = \frac{\nu}{L}{u} , \quad {\hat{z}} =  L {x}, \quad \hat{t} 
   = \frac{L^2}{\nu} t , \quad \hat{p} = \frac{\rho_\text{f}\nu^2}{L^2} p ,
\end{align}
where 'hats' indicate the dimensional variables and 'no-hats' the non-dimensional ones, one
obtains the non-dimensional Navier--Stokes system
\begin{subequations}\label{eq:NavierStokes_ShearStress}
\begin{align}
\nabla \cdot {u} &= 0,\\
\left(\partial_t +  {u} \cdot \nabla \right){u} &= - \nabla p + \nabla^2 {u},
\end{align}
\end{subequations}
where ${u}=u(z,t)\in\R^2$ and $p=p(z,t)\in\R$ represent the flow velocity and 
pressure fields, respectively. The flow field must satisfy the no-slip and constant-stress 
boundary conditions
\begin{subequations}\label{eq:NavierStokesBC1_ShearStress}
\begin{align}
\text{on }\{z_1 = \pm 1/2\},& \quad u_1 = 0 = u_2; \\
\text{on }\{z_2 = - 1/2\},& \quad u_1 = 0 = u_2; \\
\text{on }\{z_2 = 1/2\},& \quad \partial_{z_2} u_1 = -\Rey, \quad u_2 = 0;
\end{align}
\end{subequations}
where $\Rey$ is the Reynolds number
\begin{equation}
\Rey = \cfrac{\tau_0 L^2}{\rho_\text{f}\nu^2}.
\end{equation}
and in the following it will be set equal to 1000. Considering now the cavity 
seeded with one rigid circular particle of radius $a$, rigid body motion equations 
will define its trajectory in the fluid flow
\begin{subequations}\label{eq:RigidBodyMotion}
\begin{align}
  F &= M~\frac{\txtd V}{\txtd t},\\
  \cT &= I~\frac{\txtd \Omega}{\txtd t}
\end{align}
\end{subequations}
where $F$ and $\cT$ are the force and torque acting on the particle, $V$
and $\Omega$ denote its translational and rotational velocities, and $M$ and $I$ are the mass
and the inertia tensor of the particle.
The coupling between the particle and the fluid motions results from the no-slip and
no-penetration conditions on the surface of the particle. In the following the case of
particle radius $a = 0.01 L$ is considered for a particle-to-fluid density ratio
$\varrho = \rho_\text{p}/\rho_\text{f} = 2$. The initial particle position is set to 
$(z_\text{$1$,p} , z_\text{$2$,p}) = (0.38 , 0.2)$. The equations are discretized on a 
static grid via a discontinuous-Galerkin finite-element-method (DG-FEM) coupling it with 
the so-called smoothed-profile method (SPM). The details of the numerical method are
discussed in~\cite{RomanoKuhlmann1} with further background in~\cite{LuoMaxeyKarniadakis,NakayamaYamamoto,KarniadakisIsraeliOrszag,HesthavenWarburton}. 
Here we just note that we use in space a discontinuous Galerkin--finite-element 
method with triangular grid with warp-blend-nodes. Polynomial elements of order five 
are employed for the numerical simulation with triangles with side length $4 \times 10^{-3}$ 
in the $z_1$-direction. Along the $z_2$-direction a stretching function proportional 
to $z_2^(0.4)$ is adopted with minimum side length of $9\times 10^{-4}$ for the elements 
in contact with the free-surface. The time step is fixed to $1 \times 10^{-7}$~\cite{RomanoKuhlmann1}.

\begin{figure}[htbp]
\psfrag{Dz}{\scriptsize{$\cF_\sigma$}}
\psfrag{V}{\scriptsize{$\cF_v$}}
\psfrag{x}{\scriptsize{$z_1$}}
\psfrag{y}{\scriptsize{$z_2$}}
\psfrag{t}{\scriptsize{$t$}}
\psfrag{NILE predictor}{NILE balance}
\psfrag{Nonlinear predictor}{nonlinear balance}
\centering
		\includegraphics[width=0.9\textwidth]{./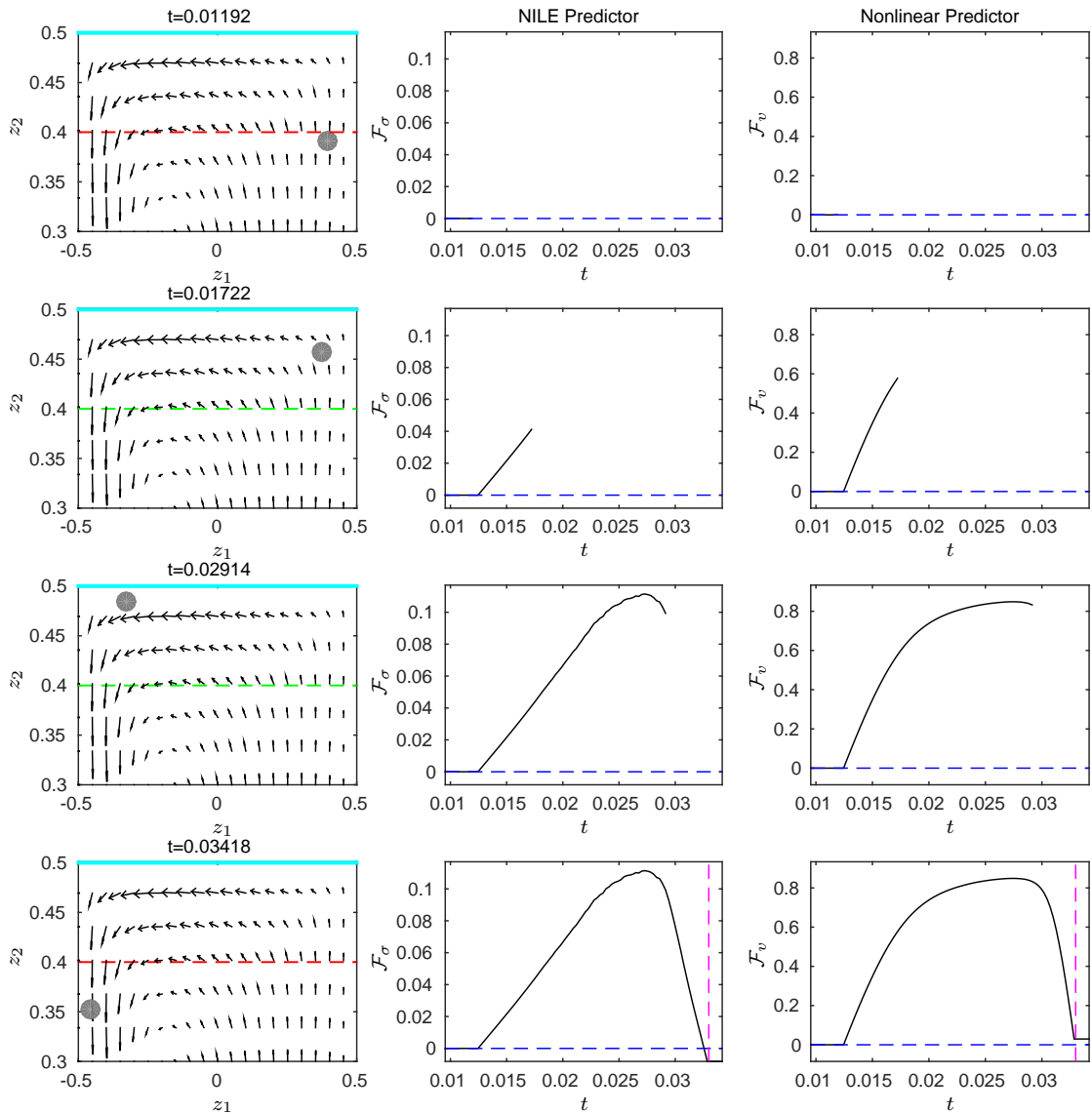}	
		\caption{\label{fig:05}Each row of the figure corresponds to one time snapshot of the 
		full numerical simulation at time $t$, i.e., we used data available in $[0,t]$. The left 
		column shows the flow fields including the free-surface for $z_2=0.5$ in cyan. The particle 
		centroid is shown 
		as a gray disc. The dashed line at $z_2=0.4$ marks the neighbourhood 
		$\cE:=\{z_1\in[-0.5,0.5], z_2\in[0.4,0.5]\}$ where we calculate the integrals for the 
		particle; red indicates the calculation is not active while green indicates an active 
		calculation of the NILE predictor as well as the full nonlinear predictor. The two rightmost 
		columns show the predictors (multiplied by a factor of 10 to increase the scale). The
		dashed blue line marks the zeros, i.e., we look for the first non-trivial zero crossing. The
		vertical dashed magenta line in the last time snapshot shows the true crossing of the particle
		from the active region past the threshold at $z_2=0.4$.}
\end{figure}

Having the full simulation data of the flow available, we want to check the predictive 
capabilities of the NILE balance condition $\cF_\sigma=0$ by computing $\cF_\sigma$ from 
the data. To make the problem harder, suppose we did not even know a reference trajectory 
inside the invariant manifold/free-surface
\benn
\cM=\{(z_1,z_2)\in\R^2:z_1\in[-0.5,0.5],z_2=0.5\}.
\eenn
Suppose we can only track the particle itself and have the local flow normal to $\cM$.
We project the particle trajectory $\tilde{\gamma}(t)$ along the direction normal to the wall
\benn
\gamma(t):=(0,\tilde{\gamma}_2(t))^\top\qquad \Rightarrow\quad \gamma(t)\subset \cM.
\eenn
This means we can calculate $\cF_\sigma(t)$ (using $t_0=0$) at fixed times according to
formula~\eqref{eq:Fsigma}. Clearly this predictor is locally linearized and only takes into
account the flow near $\cM$. To have a comparison, we consider a fully nonlinear balance
function that takes into account the local velocity of the particle at each point defined
by
\be
\cF_v(t):=\int_0^t v(\tilde{\gamma}(s))~\txtd s,
\ee
where $v(\tilde{\gamma}(s))$ is the actual particle velocity normal to the wall at each time point $s$.
To compute $\cF_v$ and $\cF_\sigma$, we only consider a region $\cE:=\{z_2\in[0.4,0.5]\}$ as shown in
Figure~\ref{fig:05}, i.e., outside of $\cE$, there is no contribution to the balance functions.
Figure~\ref{fig:05} shows the main results for the balance functions at four different time
snapshots. $\cF_\sigma$ based upon NILE performs extremely well considering the fact that only
local linearized flow near $\cM$ is taken into account. It slightly underestimates the true exit point
while using $\cF_v$ slightly overestimates it. In particular, there seems to be no significant performance
difference between the two balance functions, although one uses locally linear and the other
global fully nonlinear information.\medskip

It is important to point out that one may even try to use $\cF_\sigma$ as a predictor. Consider the 
third row in Figure~\ref{fig:05} at time $t=0.02914$. Since the balance function has just changed 
from increasing to decreasing, one may try to extrapolate to future times to predict the next time 
when $\cF_\sigma$ vanishes. This adds to the potential flexibility of the balance function
framework discussed here for many practical applications.

\section{Conclusion \& Outlook}
\label{sec:outlook}

In this paper we have proposed a framework to address the entry-exit relationship for trajectories
in dynamical systems with a strong focus on applications in fluid dynamics. Particles
travelling near invariant manifolds have a tremendous relevance for phenomena in fluids. We have
demonstrated that the concept of balance functions is well-suited to understand where particles are
expected to exit neighbourhoods of time-dependent invariant structures in fluid dynamics. Two model
problems have been used to determine that NILE balance functions provide a very efficient way to
merge theoretical precision as well as practical computation for particle trajectories. For the
Kuhlmann--Muldoon model, we showed how parameter dependencies can be uncovered using NILE balance
functions. Then we tested the concept on fully-resolved direct numerical simulation of a particle 
in a shear--stress-driven cavity. Again we obtained excellent agreement with the abstract theory. In
particular, local data near the invariant manifold and a projected particle trajectory were sufficient
to gain insight in the particle exit point and provided a practical strategy to predict it
based upon the current value and/or extrapolation of the balance function.\medskip

We stress that we only aimed here at demonstrating the main technical elements of balance functions,
and tried to show their applicability to key practical issues in fluid dynamics. Several open questions
remain after this study. On the one hand, further mathematical issues arise. For example, what happens
for balance functions in non-autonomous systems in the case of oscillatory instabilities, such as delayed
Hopf bifurcations~\cite{Neishtadt1,KuehnBook}. The question of random coefficients and change of stability
points~\cite{KuehnDelayUQ} as well as stochastic forcing~\cite{BerglundGentz,Kuske} are problems to be 
tackled in future work. Improving the link to the theory of Lagrangian coherent structures is another 
interesting direction to pursue so is the connection to purely set-valued approaches to dynamical systems. Furthermore, one might check whether balance functions can equivalently be defined using the Koopman operator, which is a strategy that already worked in other contexts for isochrons and isostable sets~\cite{MauroyMezicMoehlis}. 

With balance functions, many practical questions can now be tackled. The last 
example of the Navier-Stokes equation shows that working directly with data can be successful. To observe 
the results presented here in laboratory and field experiments would be of interest. Applications to
data analysis in oceanography and engineering are also conceivable as measurements near invariant
manifolds may potentially be easier to obtain in comparison to full monitoring of flow fields, e.g., 
in the context of flows in rivers as well as in flows near the coastline. 


\bibliographystyle{plain}
\bibliography{./KKR}

\end{document}